\documentclass[12points,reqno]{amsart}
\usepackage{amsfonts}
\usepackage{amssymb}
\usepackage{graphicx}
\usepackage{amsmath}

\newtheorem{lemma}{Lemma}
\newtheorem{prop}{Proposition}
\newtheorem{corollary}{Corollary}
\theoremstyle{definition}
\newtheorem{definition}{Definition}
\newtheorem{example}{Example}

\theoremstyle{remark}
\newtheorem{remark}{Remark}

\numberwithin{equation}{section}



\everymath{\displaystyle}

\newcommand*\un{\operatornamewithlimits{\cup}}

\begin{document}
\title{ Completions of Leavitt path algebras}

%

\author{Adel Alahmadi}
\address{Department of Mathematics, Faculty of Science, King Abdulaziz University, P.O.Box 80203, Jeddah, 21589, Saudi Arabia}
\email{analahmadi@kau.edu.sa}
\author{Hamed Alsulami}
\address{Department of Mathematics, Faculty of Science, King Abdulaziz University, P.O.Box 80203, Jeddah, 21589, Saudi Arabia}
\email{hhaalsalmi@kau.edu.sa}

%

\keywords{associative algebra, Leavitt path algebra, topological algebra}

\maketitle

\begin{abstract}
 We introduce a class of topologies on the Leavitt path algebra $L(\Gamma)$ of a finite directed graph and decompose a graded completion $\widehat{L}(\Gamma)$ as
  a direct sum of minimal ideals.
\end{abstract}

\maketitle

\section{Definitions and Terminology}
Let $\Gamma=(V, E, s, r)$ be a finite directed graph, that consists of two sets $V$ and $E,$ called vertices and edges respectively and two maps $s,r: E\to V.$ The vertices $s(e)$ and $r(e)$
are referred to as the source and range of the edge $e$ respectively.

\bigskip

 A vertex $v$ such that $s^{-1}(v)=\emptyset$ is called a sink. A path $p=e_1\cdots e_n$ in a graph $\Gamma$ is a sequence of
edges $e_1,\ldots, e_n$ such that $r(e_i)=s(e_{i+1})$ for $i=1,\ldots, n-1.$ We will refer to $n$ as the length of the path $p,$ $l(p)=n.$ Vertices are viewed as paths of length $0.$
We say that the path $p$ starts at the source $s(p)=s(e_1)$ and ends at the range $r(p)=r(e_n).$  The set of all paths of the graph $\Gamma$ is denoted sa $Path(\Gamma).$
If $s(p)=r(p)$ then we say that the path $p$ is closed. If $p=e_1\cdots e_n$ is a closed path of length $\geq 1$ and the vertices $s(e_1),\cdots, s(e_n)$ are distinct then we call the path
$p$ a cycle. Denote $V(p)=\{s(e_1),\cdots, s(e_n)\},$ $E(p)=\{e_1,\cdots, e_n\}.$ An edge $e\in E$ is called an exit of a cycle $C$ if $s(e)\in V(C),$ but $e\notin E(C).$

\bigskip

If $X,Y$ are nonempty subsets of the set $V$ then we denote $E(X,Y)=\{e\in E \mid s(e)\in X, r(e)\in Y\},$ $Path(X,Y)=\{p\in Path(\Gamma)\mid s(p)\in X, r(p)\in Y\}.$
\bigskip
A vertex $w\in V$ is called a descendant of a vertex $v\in V$ if $Path(\{v\},\{w\})\neq\emptyset.$

\bigskip
A nonempty subset $W\subseteq V$ is said to be hereditary if for an arbitrary element $w\in W$ all descendants of $w$ lie in $W$ (see[A] ).

\bigskip
Let $F$ be a field. The Leavitt path algebra $L(\Gamma)$ is the $F$-algebra presented by the sets of generators $\{v \mid v\in V\},\, \{e, e^* \mid e\in E\}$ and the set of relations
(1)  $v_iv_j=\delta_{ij} v_i$ for all $v_i, v_j\in V;$ (2) $s(e)e=er(e)=e, \, r(e)e^*=e^*s(e)=e^*$ for all $e\in E;$ (3) $e^*f=\delta_{e,f} r(e)$ for all $e,f \in E;$ (4) $v=\sum\limits_{s(e)=v} ee^*$ for an arbitrary
vertex $v$ which is not a sink, ([AA, AMP, A]).

\bigskip
The mapping $*$ which sends $ v$ to $v$ for $v\in V,$ $e$  to $e^*$ and $e^*$ to $e$ for $e\in E,$ extends to an involution of the algebra $L(\Gamma).$

\section{ Topology on $L(\Gamma)$}
We call a mapping $\gamma: V\setminus\{ sinks\}\to E$ a \textit{specialization} if $s(\gamma(v))=v$ for an arbitrary vertex $v\in V\setminus\{ sinks\}.$ Edges lying in the image
$\gamma(V\setminus\{ sinks\}) $ are called special.  For a specialization $\gamma$ consider the set $B(\gamma)$ of the products $pq^*,$ where $p=e_1\cdots e_n,\, q=f_1\cdots f_m$ are paths in
$\Gamma ;$ $e_i, f_j \in E ; r(p)=r(q)$ and either $e_n\neq f_m$ or $e_n=f_m,$ but this edge is not special.
\bigskip

In [AAJZ1] we proved that $B(\gamma)$ is a basis of the algebra $L(\Gamma).$
\bigskip

We call a path $p=e_1\cdots e_n$ of length $n\geq 1$ \textit{special} if all edges $e_1,\ldots, e_n$ are special. For an arbitrary path $p=e_1\cdots e_n$ let $i$ be the minimal integer such that the path
$e_{i+1}\cdots e_n$ is special. If the edge $e_n$ is not special then $i=n.$ Let $sd(p)=n-i.$
\bigskip

The algebra $L(\Gamma)$ is $\mathbb{Z}-$graded: $\deg(V)=0, \deg(E)=1,$ and $\deg(E^*)=-1.$
\bigskip

Let $p,q\in Path(\Gamma).$ We say that the path $p$ is a \textit{beginning} of the path $q$ and the path $q$ is a \textit{continuation} of the path $p$ if there exists a path $q'\in Path(\Gamma)$ such that
$q=pq'.$

\begin{remark}
We will often use the following straightforward fact: if $p,q\in Path(\Gamma)$ then $p^*q\neq 0$ if and only if one of the paths $p,q$ is a continuation of the other one.

\end{remark}

For nonnegative real numbers $n, s, d$ consider the subspace $V_{n,s,d}$ of $L(\Gamma)$ $F$-spanned by all products $pq^*$ such that $ p,q \in Path(\Gamma),\, l(p)+l(q)\geq n, sd(p)+sd(q)\leq s,\, |\deg(pq^*)|=|l(p)-l(q)|\leq d.$

\begin{lemma}
$V_{n_1,s_1,d_1} . V_{n_2,s_2,d_2}\subseteq V_{\frac{1}{2}(n_1+n_2-d_1-d_2),s_1+s_2,d_1+d_2}.$
\end{lemma}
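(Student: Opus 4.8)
The plan is to reduce to spanning elements. Since $V_{n_1,s_1,d_1}\cdot V_{n_2,s_2,d_2}$ is $F$-spanned by the products $(p_1q_1^*)(p_2q_2^*)$ with $p_i,q_i\in Path(\Gamma)$ subject to the constraints defining $V_{n_i,s_i,d_i}$, it suffices to show that each such product lies in $V_{N,S,D}$, where $N=\tfrac12(n_1+n_2-d_1-d_2)$, $S=s_1+s_2$, $D=d_1+d_2$. Rewrite the product as $p_1(q_1^*p_2)q_2^*$. By the Remark it vanishes unless one of $q_1,p_2$ is a continuation of the other. \emph{First case:} $p_2=q_1p_2'$; then $q_1^*p_2=p_2'$ and the product is the single monomial $PQ^*$ with $P=p_1p_2'$, $Q=q_2$. \emph{Second case:} $q_1=p_2q_1''$; then $q_1^*p_2=(q_1'')^*$ and the product is $PQ^*$ with $P=p_1$, $Q=q_2q_1''$. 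I would treat the first case in detail; the second reduces to it via the involution $*$, which maps each $V_{n,s,d}$ onto itself (because $(pq^*)^*=qp^*$ has the same length sum, special-degree sum and absolute degree): since $(\alpha\beta)^*=\beta^*\alpha^*$ with $\beta^*\in V_{n_2,s_2,d_2}$ and $\alpha^*\in V_{n_1,s_1,d_1}$, the product $\beta^*\alpha^*=q_2(p_2^*q_1)p_1^*$ falls under the first case with the two index sets interchanged, under which $N,S,D$ are symmetric.

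The degree estimate is immediate: the product is homogeneous of degree $\deg(p_1q_1^*)+\deg(p_2q_2^*)$, and as it equals $PQ^*$ we get $|\deg(PQ^*)|\le|\deg(p_1q_1^*)|+|\deg(p_2q_2^*)|\le d_1+d_2=D$.

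The main point is the length estimate $l(P)+l(Q)\ge N$. Put $a=l(p_1)$, $b=l(q_1)$, $c=l(p_2)$, $e=l(q_2)$. In the first case $l(P)+l(Q)=a+c-b+e$, and we know $a+b\ge n_1$, $c+e\ge n_2$, $|a-b|\le d_1$, $|c-e|\le d_2$, and $c\ge b$ (because $p_2=q_1p_2'$). There are two useful lower bounds: from $a-b\ge -d_1$ one gets $a+c-b+e=(a-b)+(c+e)\ge n_2-d_1$; from $c-b\ge 0$ together with $a\ge n_1-b$ and $e\ge c-d_2\ge b-d_2$ one gets $a+c-b+e\ge a+e\ge n_1-d_2$. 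Adding these two inequalities and dividing by $2$ yields $l(P)+l(Q)\ge\tfrac12\big((n_1-d_2)+(n_2-d_1)\big)=N$; this is exactly where the factor $\tfrac12$ comes from. (When $N\le 0$ there is nothing to prove.)

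The special-degree estimate $sd(P)+sd(Q)\le S$ is the step I expect to be most delicate, since it requires tracking maximal special suffixes through the concatenation $P=p_1p_2'$. Here $sd(Q)=sd(q_2)$, and I would distinguish three cases for the overhang $p_2'$. If $p_2'$ is trivial, then $sd(P)=sd(p_1)$ and $sd(P)+sd(Q)\le sd(p_1)+sd(q_2)\le s_1+s_2$. If $p_2'$ is nontrivial but not entirely special, then the maximal special suffix of $p_1p_2'$ and that of $p_2=q_1p_2'$ both lie inside $p_2'$, hence $sd(P)=sd(p_2')=sd(p_2)$ and $sd(P)+sd(Q)\le sd(p_2)+sd(q_2)\le s_2\le S$. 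Finally, if $p_2'$ is entirely special, then $sd(p_2)=l(p_2')+sd(q_1)$ and $sd(P)=l(p_2')+sd(p_1)$, so $sd(P)+sd(Q)=l(p_2')+sd(p_1)+sd(q_2)$; since $s_1\ge sd(p_1)+sd(q_1)$ and $s_2\ge sd(p_2)+sd(q_2)=l(p_2')+sd(q_1)+sd(q_2)$, adding gives $s_1+s_2\ge sd(P)+sd(Q)+2sd(q_1)\ge sd(P)+sd(Q)$. Combining the degree, length and special-degree estimates shows $PQ^*\in V_{N,S,D}$ in the first case, and the involution argument settles the second, which completes the proof.
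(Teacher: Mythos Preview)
Your proof is correct and follows the same route as the paper: reduce to monomials, split cases according to whether $p_2$ continues $q_1$ or vice versa, and verify the length, special-degree and degree constraints for the resulting monomial $PQ^*$. The paper's execution is a bit more compact---for the length bound it simply observes $l(p_i),l(q_i)\ge\tfrac12(n_i-d_i)$ and hence $l(p_1p_2')+l(q_2)\ge l(p_1)+l(q_2)\ge\tfrac12(n_1-d_1)+\tfrac12(n_2-d_2)$, and for the special-degree bound it uses the subadditivity $sd(p_1p_2')\le sd(p_1)+sd(p_2')\le sd(p_1)+sd(p_2)$ in one line---but your three-case analysis and averaging argument reach the same conclusions.
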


\begin{proof}
Let $p_i, q_i\in Path(\Gamma),$  $p_iq_i^*\in V_{n_i,s_i,d_i}, i=1,2.$ Then $l(p_i)+l(q_i)\geq n_i,  |l(p_i)-l(q_i)|\leq d_i,$ which implies $l(p_i), l(q_i)\geq \frac{1}{2}(n_i-d_i).$
If $p_1q^*_1p_2q^*_2\neq 0$ then in view of the Remark 1 there exists a path $p'_2\in Path(\Gamma)$ such that  $p_2=q_1p'_2$ or there exists a path $q'_1\in Path(\Gamma)$ such that  $q_1=p_2q'_1.$

We will consider only the first case $p_2=q_1p'_2.$ The second case is treated similarly. We have  $p_1q_1^*p_2q^*_2=p_1p'_2q^*_2.$ Clearly  $$l(p_1p'_2)+l(q_2)\geq l(p_1)+l(q_2)\geq \frac{1}{2}(n_1-d_1)+\frac{1}{2}(n_2-d_2).$$
Furthermore, $sd(p_1p'_2)+sd(q_2)\leq sd(p_1)+sd(p'_2)+sd(q_2)\leq s_1+s_2.$ \newline Finally, $\deg(p_1q_1^*p_2q^*_2)=\deg(p_1q^*_2)+\deg(p_2q^*_2),$ hence
$|\deg(p_1q_1^*p_2q^*_2)|=|\deg(p_1q^*_2)+\deg(p_2q^*_2)|\leq d_1+d_2.$
This completes the proof of the Lemma.

\end{proof}

Let $k\geq 1.$ Let $V_k=\sum\{ V_{n,s,d} \mid n\geq k(s+d+1)\}.$ Clearly, $V_{k_1}\subset V_{k_2}$ for $k_1\geq k_2.$

\begin{lemma}
Let $k\geq 3.$ Then $V_k V_k\subseteq V_{\frac{1}{2}(k-1)}.$

\end{lemma}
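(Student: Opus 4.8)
The plan is to reduce the claim to a single product of the basic subspaces $V_{n,s,d}$ and then to verify one numerical inequality, the rest being supplied by Lemma 1.

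First I would use that multiplication in $L(\Gamma)$ distributes over sums, so that $V_k V_k$ equals the sum of all products $V_{n_1,s_1,d_1}\cdot V_{n_2,s_2,d_2}$ taken over all triples with $n_i\geq k(s_i+d_i+1)$ for $i=1,2$. Hence it suffices to show that each such product is contained in $V_{\frac12(k-1)}$. By Lemma 1 this product lies in $V_{n',s',d'}$ with $n'=\tfrac12(n_1+n_2-d_1-d_2)$, $s'=s_1+s_2$, $d'=d_1+d_2$, so by the definition of $V_{\frac12(k-1)}$ it remains only to check that $n'\geq \tfrac12(k-1)(s'+d'+1)$.

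For the numerical estimate I would add the two hypotheses $n_i\geq k(s_i+d_i+1)$ \emph{before} discarding anything, obtaining $n_1+n_2\geq k(s'+d'+2)$, and therefore $2n'=n_1+n_2-d'\geq k(s'+d'+2)-d'$. Subtracting $(k-1)(s'+d'+1)$ from the right-hand side leaves $k+s'+1$, which is positive since $k\geq 3$ and $s'\geq 0$; thus $2n'\geq (k-1)(s'+d'+1)$, which is what was needed. The hypothesis $k\geq 3$ is also exactly what guarantees $\tfrac12(k-1)\geq 1$, so that $V_{\frac12(k-1)}$ is one of the subspaces in the family under consideration.

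I do not expect a genuine obstacle here: the substantive content is entirely in Lemma 1, and the remainder is the bookkeeping of the inequality above. The only point deserving care is the order of operations in the last step — one must combine the two lower bounds on $n_1$ and $n_2$ first and only afterwards absorb the term $-d'$, since estimating each factor separately against $(k-1)(s_i+d_i+1)$ is too lossy.
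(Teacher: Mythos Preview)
Your proof is correct and follows essentially the same route as the paper: reduce to a single product $V_{n_1,s_1,d_1}\cdot V_{n_2,s_2,d_2}$ with $n_i\ge k(s_i+d_i+1)$, apply Lemma~1, and then verify the needed inequality for $n'=\tfrac12(n_1+n_2-d_1-d_2)$. The only difference is cosmetic: the paper first throws away $s_1+s_2$ from $n_1+n_2-d_1-d_2$ before estimating, whereas you keep everything and compute the surplus $k+s'+1$ directly; your version is in fact a bit more explicit about the ``$+1$'' in the target $\tfrac12(k-1)(s'+d'+1)$, which the paper's displayed chain of inequalities leaves implicit.
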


\begin{proof}
Suppose that $n_i\geq k(s_i+d_i+1), i=1,2.$ Then $$V_{n_1,s_1,d_1} .V_{n_2,s_2,d_2} \subseteq V_{\frac{1}{2}(n_1+n_2-d_1-d_2),s_1+s_2,d_1+d_2}.$$
We have $\frac{1}{2}(n_1+n_2-d_1-d_2)\geq \frac{1}{2}(n_1+n_2-d_1-d_2-s_1-s_2)>\frac{1}{2}(k-1)(s_1+s_2+d_1+d_2).$

\end{proof}

\begin{lemma}
For an arbitrary element $a\in L(\Gamma),$ an arbitrary $k\geq 1$ there exists $k'\geq 1$ such that $aV_{k'} +V_{k'}a\subseteq V_k.$

\end{lemma}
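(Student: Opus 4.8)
The plan is to first observe that $a$ itself lies in some $V_{0,s_0,d_0}$, and then to bound the products $V_{0,s_0,d_0}\cdot V_{k'}$ and $V_{k'}\cdot V_{0,s_0,d_0}$ by distributing over the defining sum of $V_{k'}$ and applying Lemma 1 a single time. For the first step: writing the expansion of $a$ in the basis $B(\gamma)$ as $a=\sum_i c_i p_iq_i^*$, each $p_iq_i^*$ lies in $V_{0,\,sd(p_i)+sd(q_i),\,|l(p_i)-l(q_i)|}$, hence in $V_{0,s_0,d_0}$ where $s_0=\max_i\bigl(sd(p_i)+sd(q_i)\bigr)$ and $d_0=\max_i|l(p_i)-l(q_i)|$ (the space $V_{0,s,d}$ only grows as $s,d$ increase); since $V_{0,s_0,d_0}$ is a subspace, $a\in V_{0,s_0,d_0}$. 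Thus $aV_{k'}\subseteq V_{0,s_0,d_0}\cdot V_{k'}$ and $V_{k'}a\subseteq V_{k'}\cdot V_{0,s_0,d_0}$, so it is enough to choose $k'$ with both of these products contained in $V_k$.

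By definition $V_{k'}=\sum\{V_{\ell,\sigma,\delta}\mid \ell\ge k'(\sigma+\delta+1)\}$, and since multiplication distributes over such sums it suffices to treat $V_{0,s_0,d_0}\cdot V_{\ell,\sigma,\delta}$ and $V_{\ell,\sigma,\delta}\cdot V_{0,s_0,d_0}$ for a single triple $(\ell,\sigma,\delta)$ subject to $\ell\ge k'(\sigma+\delta+1)$. Since the target subscript in Lemma 1 is symmetric in its two factors, both of these products lie in $V_{\frac12(\ell-d_0-\delta),\,s_0+\sigma,\,d_0+\delta}$, which is contained in $V_k$ whenever
\[
\frac12(\ell-d_0-\delta)\ \ge\ k\bigl(s_0+\sigma+d_0+\delta+1\bigr),
\]
i.e.\ whenever $\ell\ge 2k(s_0+d_0+1)+d_0+(2k+1)(\sigma+\delta)$, after bounding $2k(\sigma+\delta)+\delta$ above by $(2k+1)(\sigma+\delta)$. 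Putting $C=2k(s_0+d_0+1)+d_0$, a constant depending only on $a$ and $k$, the elementary inequality $C+(2k+1)(\sigma+\delta)\le(C+2k+1)(\sigma+\delta+1)$ shows that
\[
k'\ =\ 2k(s_0+d_0+1)+d_0+2k+1
\]
works: for this $k'$ one has $\ell\ge k'(\sigma+\delta+1)=(C+2k+1)(\sigma+\delta+1)\ge C+(2k+1)(\sigma+\delta)$, which is exactly the inequality required above. (Note $k'\ge1$, as needed.)

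I expect the only genuinely delicate point to be this last step: seeing that a single $k'$ suffices uniformly over all admissible triples $(\ell,\sigma,\delta)$. The contribution forced by the fixed element $a$ — the term $2k(s_0+d_0+1)$, coming from its largest $sd$-value and degree — is a constant, whereas the requirement imposed by an arbitrary summand $V_{\ell,\sigma,\delta}$ of $V_{k'}$ grows linearly in $\sigma+\delta$ with slope about $2k$; choosing $k'$ larger than the sum of this constant and this slope lets the single factor $\sigma+\delta+1$ occurring in the definition of $V_{k'}$ absorb both contributions at once. Everything else is the bookkeeping already carried out in Lemmas 1 and 2.
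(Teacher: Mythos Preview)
Your proof is correct and follows essentially the same approach as the paper: reduce to a single product $pq^*$, place it in some $V_{n_0,s_0,d_0}$, apply Lemma~1 once, and solve the resulting linear inequality in $\sigma+\delta$ to extract $k'$. The only cosmetic differences are that the paper invokes ``without loss of generality $a=pq^*$'' (taking a maximum of the resulting $k'$'s implicitly) whereas you absorb all basis terms of $a$ into a common $V_{0,s_0,d_0}$ with $n_0=0$; and the paper keeps the actual length $n_0$ in its constant $c=k(s_0+d_0+1)-\tfrac12(n_0-d_0)$, which can give a slightly smaller $k'$ than your $C=2k(s_0+d_0+1)+d_0$, but this is immaterial to the statement.
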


\begin{proof}

Without loss of generality we can assume that $a=pq^*,$ where $p,q$ are paths. Then $a\in V_{n_0,s_0,d_0},\,\, n_0=l(p)+l(q), \, s_0=sd(p)+sd(q),\, d_0=|\deg(a)|.$
\bigskip

Let $n\geq 0, s\geq 0, d\geq 0.$ By Lemma 1 we have  $V_{n_0,s_0,d_0} V_{n,s,d}\subseteq V_{\frac{1}{2}(n+n_0-d-d_0),s+s_0,d+d_0}.$ For the right hand side  to lie in $V_k$ it is sufficient to have
$\frac{1}{2}(n-(s+d)+n_0-d_0)\geq k(s+d+s_0+d_0+1)$ or, equivalently $\frac{1}{2}(n-(s+d))\geq k(s+d)+c,$ where $c=k(s_0+d_0+1)-\frac{1}{2}(n_0-d_0).$

If $n\geq k'(s+d+1),$ then $\frac{1}{2}(n-(s+d))\geq\frac{1}{2}(k'-1)(s+d)+\frac{1}{2}k'.$ Hence, for $k'\geq \max\{2k+1,2c\}$ the inclusion of the Lemma holds.

\end{proof}

\begin{lemma}
$\bigcap\limits_{k\geq 1} V_k=(0).$
\end{lemma}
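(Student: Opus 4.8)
The plan is to exploit the basis $B(\gamma)$ of $L(\Gamma)$ from [AAJZ1]. For a basis monomial $b=pq^{*}\in B(\gamma)$ put $\lVert b\rVert=l(p)+l(q)$. Since $B(\gamma)$ is a basis, every nonzero $a\in L(\Gamma)$ has a unique finite expansion $a=\sum_{i}\alpha_{i}b_{i}$ with $b_{i}\in B(\gamma)$, $\alpha_{i}\neq0$, so $m(a):=\min_{i}\lVert b_{i}\rVert$ is a well-defined nonnegative integer. The lemma follows at once from the inclusion
$$V_{k}\subseteq\operatorname{span}_{F}\{\,b\in B(\gamma):\lVert b\rVert\geq k\,\},\qquad k\geq1;$$
indeed, $0\neq a\in\bigcap_{k}V_{k}$ would force $m(a)\geq k$ for every $k$, which is impossible. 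The point requiring work is that $V_{n,s,d}$ is by definition spanned by \emph{arbitrary} products $pq^{*}$, not by basis monomials, so one must see what such a product becomes when expanded in $B(\gamma)$.

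The key estimate is: for $p,q\in Path(\Gamma)$ with $r(p)=r(q)$, the element $pq^{*}$ is an $F$-linear combination of basis monomials $b\in B(\gamma)$ each satisfying $\lVert b\rVert\geq l(p)+l(q)-\bigl(sd(p)+sd(q)\bigr)$. Granting this, take $pq^{*}\in V_{n,s,d}$ with $n\geq k(s+d+1)$: then $l(p)+l(q)\geq n$ and $sd(p)+sd(q)\leq s$, so every basis monomial occurring in $pq^{*}$ has $\lVert b\rVert\geq n-s\geq k(s+d+1)-s=(k-1)s+kd+k\geq k$. Thus $V_{n,s,d}$ lies in $\operatorname{span}_{F}\{b:\lVert b\rVert\geq k\}$ for every triple with $n\geq k(s+d+1)$, and summing over such triples gives the displayed inclusion.

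It remains to prove the key estimate, which I would do by induction on $sd(p)+sd(q)$. If $pq^{*}\in B(\gamma)$ --- in particular whenever $sd(p)+sd(q)\leq1$, or one of $p,q$ is a vertex, or $p$ and $q$ have distinct last edges --- there is nothing to prove, since $\lVert pq^{*}\rVert=l(p)+l(q)$. Otherwise $p=p'e$ and $q=q'e$ with $e$ a common last edge that is special; because $\gamma$ is a specialization, $e=\gamma(v)$ is the unique special edge emitted by $v:=s(e)$, and $r(p')=r(q')=v$. Relation (4) yields $ee^{*}=v-\sum_{g\in s^{-1}(v),\,g\neq e}gg^{*}$, hence (using $p'v=p'$ and $vq'^{*}=q'^{*}$)
$$pq^{*}=p'ee^{*}q'^{*}=p'q'^{*}-\sum_{g\in s^{-1}(v),\,g\neq e}(p'g)(q'g)^{*}.$$
Each $(p'g)(q'g)^{*}$ already lies in $B(\gamma)$ (its last edge $g$ is not special) and has $\lVert(p'g)(q'g)^{*}\rVert=l(p)+l(q)$, which exceeds the required bound. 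For $p'q'^{*}$ one has $l(p')+l(q')=l(p)+l(q)-2$ and, since $e$ is special, $sd(p')=sd(p)-1$ and $sd(q')=sd(q)-1$; consequently $sd(p')+sd(q')<sd(p)+sd(q)$ while $l(p')+l(q')-\bigl(sd(p')+sd(q')\bigr)=l(p)+l(q)-\bigl(sd(p)+sd(q)\bigr)$, so the inductive hypothesis applies to $p'q'^{*}$ and produces basis monomials of the required size. This completes the induction. The one delicate point is exactly this bookkeeping --- that appending a special edge increases $sd$ by $1$, so that stripping a common special edge lowers $sd(p)+sd(q)$ by precisely $2$ and keeps $l(p)+l(q)-\bigl(sd(p)+sd(q)\bigr)$ invariant --- which is what makes the induction well-founded and carries the length bound through; everything else (including that the $\mathbb{Z}$-grading is automatically preserved, the defining relations being homogeneous) is routine.
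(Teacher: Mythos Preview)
Your proof is correct and follows essentially the same route as the paper's. The paper introduces the subspace $V_{(n)}=\operatorname{span}_F\{pq^{*}\in B(\gamma):l(p)+l(q)\geq n\}$ (your $\operatorname{span}_F\{b:\lVert b\rVert\geq n\}$), asserts without further argument that $V_{n,s,d}\subseteq V_{(n-s)}$, and deduces $V_k\subseteq V_{(k)}$; your ``key estimate'' is precisely a proof of the inclusion $V_{n,s,d}\subseteq V_{(n-s)}$ that the paper leaves as ``easy to see'', and your induction on $sd(p)+sd(q)$ supplies the honest bookkeeping (stripping a common special last edge drops $l(p)+l(q)$ and $sd(p)+sd(q)$ each by $2$, keeping their difference invariant).
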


\begin{proof}
Recall that the basis $B(\gamma)$ of the algebra $L(\Gamma)$ that corresponds to the specialization $\gamma$ consists of products $pq^*,$ where $p=e_1\cdots e_n, q=f_1\cdots f_m \in Path(\Gamma);$
$e_i, f_j\in E;$ $r(p)=r(q)$ and either $e_n\neq f_m$ or $e_n=f_m,$ but this edge is not special.
Let $V_{(n)}$  denote the $F$-algebra of all products $pq^*\in B(\gamma)$ such that $ l(p)+l(q)\geq n.$
\bigskip

Clearly, $\bigcap\limits_{n\geq 1} V_{(n)}=(0).$ It is easy to see that $V_{n,d,s}\subseteq V_{(n-s)}.$ Hence $V_k\subseteq V_{(k)}$ for
$k\in\mathbb{Z}, k\geq 1,$
which implies the assertion of the Lemma.
\end{proof}

The subspaces $\{V_k\}_{k\geq1}$ form a basis of neighborhoods of $0$ in $L(\Gamma)$ and define a topology. By Lemmas 2, 3 this topology  is
compatible with the algebra structure. Let $\overline{L(\Gamma) }$ be the completion of the topological  algebra $L(\Gamma).$ Let $\overline{L(\Gamma) }_i$ denote the completion
of the homogeneous component $L(\Gamma)_i$ of degree $i$ in the algebra $\overline{L(\Gamma) }.$ The main focus of this paper will be on the completion $\widehat{L}(\Gamma)=\sum\limits_{i\in \mathbb{Z}} \overline{L(\Gamma)_i}.$

\begin{example}
Let $\Gamma=$ \includegraphics[width=.05\textwidth]{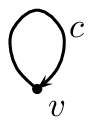} be a loop. The only edge $c$ is special. Hence
$ V_k=(0)$ for $k\geq1.$ The topology is discrete.
\end{example}

\begin{example}
Let $\Gamma=$ \includegraphics[width=.05\textwidth]{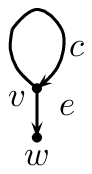}. The Leavitt path algebra $L(\Gamma)$ is the so called algebraic Toplitz algebra. It is isomorphic to the Jacobson algebra [J].
Let $I$ be the ideal of $L(\Gamma)$ generated by the vertex $w.$ Then $I$ is isomorphic to the algebra of finitary (having finitely many nonzero entries) infinite matrices $M_{\infty}(F)$ and
$(0)\to M_{\infty}(F)\to L(\Gamma)\to F[t^{-1},t]\to (0)$ is the nonsplit extension (see [SM, AAZ2]).  Let $e$ be the special edge. The graded completion
$\widehat{L}(\Gamma)$ is isomorphic to the algebra of infinite (not necessarily finitary) matrices having finitely many nonzero diagonals, $\widehat{L}(\Gamma)=\{ (a_{ij})_{i,j\geq 1} \mid a_{ij}\in F, \text{ there exists } d\geq 1 \text{ such that } a_{ij}=0 \text{ whenever } |i-j|> d\}.$
\end{example}

\begin{lemma}
Let $k\geq 2.$ Let $a$ be an element from $V_k$ and let $a=\sum\limits_{i} \alpha_i a_i$ be the presentation of $a$ as a linear combination of basic elements $a_i\in B(\gamma), \, \alpha_i\in F.$
Then all the elements $a_i$ lie in $V_{k-1}.$
\end{lemma}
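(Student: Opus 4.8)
The plan is to reduce the statement to a single product $pq^*$ and to analyze how such a product is rewritten in the basis $B(\gamma)$. Write $a=\sum_j b_j$ with $b_j\in V_{n_j,s_j,d_j}$ and $n_j\geq k(s_j+d_j+1)$, and expand each $b_j$ as a finite linear combination of products $pq^*$ with $p,q\in Path(\Gamma)$, $l(p)+l(q)\geq n_j$, $sd(p)+sd(q)\leq s_j$ and $|l(p)-l(q)|\leq d_j$. Rewriting every such $pq^*$ in the basis and collecting terms, the uniqueness of the expansion of $a$ in $B(\gamma)$ shows that each $a_i$ with $\alpha_i\neq 0$ coincides with one of the basic elements so produced. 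Hence it suffices to prove: \emph{if $pq^*$ satisfies the three inequalities above for parameters $n,s,d$ with $n\geq k(s+d+1)$, then every basic element appearing when $pq^*$ is rewritten in $B(\gamma)$ lies in $V_{k-1}$}.

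Fix such a $pq^*$. If $r(p)\neq r(q)$ then $pq^*=0$, so assume $r(p)=r(q)$. If the last edges of $p$ and $q$ differ, or agree but are not special, then $pq^*\in B(\gamma)$ already. Otherwise $p=p_0e$, $q=q_0e$ with $e=\gamma(v)$ special and $v=s(e)=r(p_0)=r(q_0)$, and relation $(4)$ gives
\[
pq^*=p_0(ee^*)q_0^*=p_0q_0^*-\sum_{g\neq e,\, s(g)=v}(p_0g)(q_0g)^*.
\]
Since $\gamma(v)=e$ is the only special edge with source $v$, each term $(p_0g)(q_0g)^*$ has the non-special edge $g$ as its last edge on both sides, hence is basic; it has the same degree as $pq^*$, total length $l(p)+l(q)$, and $sd$-sum $0$. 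The term $p_0q_0^*$ has the same degree as $pq^*$ and total length $l(p)+l(q)-2$, and we repeat the procedure on it; the process terminates because the total length strictly decreases at each step.

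The quantitative heart of the argument is that the number $t$ of contraction steps is at most $\min(sd(p),sd(q))$: a contraction at the $j$-th step is carried out only when the last edges of the two current paths coincide and are special, which forces the last $j$ edges of $p$ (and of $q$) to be special, i.e. $j\leq sd(p)$ and $j\leq sd(q)$. Consequently every basic element produced — the exit terms and the final contracted term — has total length $\geq l(p)+l(q)-2t\geq l(p)+l(q)-(sd(p)+sd(q))\geq n-s$; has $sd$-sum $\leq sd(p)+sd(q)\leq s$ (deleting a special terminal segment of length $\leq sd$ from a path cannot raise its $sd$-value); and has degree equal to $\deg(pq^*)$, hence of absolute value $\leq d$. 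Therefore every such basic element lies in $V_{n-s,\,s,\,d}$.

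Finally, $V_{n-s,s,d}\subseteq V_{k-1}$: from $n\geq k(s+d+1)$ we get
\[
n-s\;\geq\;k(s+d+1)-s\;=\;(k-1)(s+d+1)+(d+1)\;\geq\;(k-1)(s+d+1),
\]
so $V_{n-s,s,d}$ is one of the subspaces whose sum defines $V_{k-1}$. Combined with the reduction of the first paragraph this proves the lemma. The main obstacle is the bookkeeping in the third paragraph: one must check that the rewriting halts and, crucially, that the $sd$-sum does not grow while the total length drops by at most $s$ — both of which rest on the observation that a contraction is performed only along a special edge, so that $t\leq\min(sd(p),sd(q))$.
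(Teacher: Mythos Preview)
Your argument is correct and follows essentially the same route as the paper: reduce to a single monomial $pq^*$, analyze its rewriting into $B(\gamma)$, and bound the loss of length in terms of the special tails. The paper states the key facts about the rewritten terms ($l(p_i)=l(p)-r$, $l(q_i)=l(q)-r$, same degree) by appeal to the Gr\"obner--Shirshov algorithm, whereas you make the rewriting explicit via relation~(4) and derive the bound $t\le\min(sd(p),sd(q))$; your final inequality $n-s\ge (k-1)(s+d+1)$ is a slightly coarser variant of the paper's $l(p)+l(q)-2r\ge (k-1)(sd(p_i)+sd(q_i)+|\deg|+1)$, but both yield the same conclusion.
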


\begin{proof}
Without loss of generality we will assume that $a=pq^*; \, p,q\in Path(\Gamma),\, r(p)=r(q),\, l(p)+l(q)\geq k(sd(p)+sd(q)+\deg(pq^*)+1).$ The presentation  $a=\sum\limits_{i} \alpha_i a_i$
is obtained by the Groebner-Shirshov algorithm ( see [BC, AAJZ1]). All the basic elements $a_i$ are of the types $a_i =p_iq^*_i,$ where $p_i,q_i\in Path(\Gamma),\, l(p_i)=l(p)-r,\, l(q_i)=l(q)-r,$
$\deg(p_iq_i^*)=\deg(pq^*).$ Now, $l(p_i)+l(q_i)=l(p)+l(q)-2r\geq k(sd(p_i)+sd(q_i)+2r+\deg(p_iq^*_i)+1)-2r\geq (k-1)(sd(p_i)+sd(q_i)+\deg(p_iq^*_i)+1),$ which finishes the proof of the Lemma.
\end{proof}

\begin{lemma}
$B(\gamma)$ is a topological basis of the algebra $\widehat{L}(\Gamma),$ i.e. an arbitrary element of $\widehat{L}(\Gamma)$ can be uniquely represented as a converging series
$\sum\limits_{b\in B(\gamma)} \alpha_b b,\, \alpha_b\in F.$
\end{lemma}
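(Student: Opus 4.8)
The plan is to establish uniqueness (the coefficients are determined by the element) and existence (every element of $\widehat{L}(\Gamma)$ arises this way) separately, with Lemma 6 as the main tool in both. I first record two preliminary observations. Since $\{V_k\}_{k\geq1}$ is a countable fundamental system of neighbourhoods of $0$, the topology is metrizable, and since each $V_k$ is an $F$-subspace, a family $(\alpha_b b)_{b\in B(\gamma)}$ is summable --- which is the content of ``the series $\sum_{b}\alpha_b b$ converges'' --- if and only if for every $k$ the set $\{b\in B(\gamma)\mid \alpha_b\neq0,\ b\notin V_k\}$ is finite. Secondly, for a fixed basic element $b=pq^{*}$ we have, by the proof of Lemma 4 ($V_k\subseteq V_{(k)}$) together with the linear independence of $B(\gamma)$, that $b\in V_k$ forces $l(p)+l(q)\geq k$; hence $b\notin V_k$ as soon as $k>l(p)+l(q)$. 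Finally, Lemma 6 will be used in the form: if $a\in V_k$ with $k\geq2$, then every basic element occurring in the expansion of $a$ over $B(\gamma)$ lies in $V_{k-1}$.

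For uniqueness, suppose a convergent series $\sum_{b}\gamma_b b$ equals $0$ in $\overline{L(\Gamma)}$, and fix a basic element $b_0=pq^{*}$. Choose $k\geq2$ with $k-1>l(p)+l(q)$, so $b_0\notin V_{k-1}$. Convergence to $0$ yields a finite set $S\ni b_0$ of basic elements with $\sum_{b\in S}\gamma_b b\in V_k$; applying Lemma 6 to this finite linear combination forces every basic element actually occurring in it to lie in $V_{k-1}$, so $\gamma_{b_0}\neq0$ would contradict $b_0\notin V_{k-1}$. Hence $\gamma_{b_0}=0$ for all $b_0$, which gives uniqueness of the representation.

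For existence, note first that $\widehat{L}(\Gamma)=\sum_{i\in\mathbb{Z}}\overline{L(\Gamma)_i}$ and that a finite sum of convergent series whose supports sit in distinct degrees is again a convergent series; so it suffices to represent an element $x\in\overline{L(\Gamma)_i}$ of a single homogeneous component. Pick a Cauchy sequence $(a_n)$ in $L(\Gamma)_i$ with $a_n\to x$ and write $a_n=\sum_b\alpha_{b,n}b$ (finite sums over degree-$i$ basic elements). The crucial step is that each coordinate sequence $(\alpha_{b,n})_n$ is eventually constant: for $b=pq^{*}$ pick $k\geq2$ with $k-1>l(p)+l(q)$; for $n,m$ large $a_n-a_m\in V_k$, so by Lemma 6 every basic element in its expansion --- in particular $b$, if $\alpha_{b,n}\neq\alpha_{b,m}$ --- lies in $V_{k-1}$, a contradiction, so $\alpha_{b,n}=\alpha_{b,m}$. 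Denote the eventual value $\alpha_b$. Choosing, for a given $k$, an $N$ with $a_n-a_N\in V_{k+1}$ for all $n\geq N$, Lemma 6 shows that $\alpha_{b,n}\neq\alpha_{b,N}$ implies $b\in V_k$; consequently $\{b\mid\alpha_b\neq0,\ b\notin V_k\}$ is contained in the finite support of $a_N$, so the series $y:=\sum_b\alpha_b b$ converges by completeness. It remains to check $y=x$: for each $k$, both $x-a_N$ and $y-a_N$ lie in $\overline{V_k}$ --- the former since $a_n-a_N\in V_{k+1}\subseteq V_k$ for $n\geq N$, the latter since every nonzero term of $y-a_N=\sum_b(\alpha_b-\alpha_{b,N})b$ is a scalar multiple of a basic element already shown to lie in $V_k$ and $V_k$ is a subspace --- so $x-y\in\overline{V_k}$ for all $k$; as $\bigcap_k\overline{V_k}=\{0\}$ (the completion being Hausdorff), $x=y$.

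I expect the existence half to be the main obstacle, and within it the final identification $y=x$: one must argue on the level of the completion that the coefficients read off a single Cauchy representative really reassemble to the given element. The mechanism that makes everything run is Lemma 6, which translates ``$a$ sits deep in the filtration'' into ``every basic constituent of $a$ does''; granted that, the rest is the routine topological bookkeeping indicated above --- eventual constancy of coordinates, finiteness of supports modulo each $V_k$, and the closure facts $\overline{V_k}+\overline{V_k}\subseteq\overline{V_k}$ and $\bigcap_k\overline{V_k}=\{0\}$.
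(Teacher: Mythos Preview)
Your proof is correct and follows essentially the same strategy as the paper: both the uniqueness and existence arguments hinge on Lemma~5 (which you cite as ``Lemma~6''), passing from membership of a finite sum in $V_k$ to control over each of its basic constituents in $V_{k-1}$. Your existence argument via Cauchy sequences and eventual constancy of coordinates is a more explicit packaging of the paper's terser Cauchy-set argument, but the underlying mechanism is identical.
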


\begin{proof}
An arbitrary element of $\widehat{L}(\Gamma)$ can be represented as a converging sum $\sum\limits_{i\in\Omega} a_i,$ where $\{a_i\in L(\Gamma), i\in\Omega\}$ is a Cauchy set.
In other words for an arbitrary $k\geq 1$ the set $\{ i\in \Omega \mid a_i\notin V_k\}$ is finite. Let $a_i=\sum\limits_{j}\alpha_{ij} a_{ij},\, 1\leq j\leq t_i,$ be the decomposition of $a_i$ as a linear combination
of distinct basic elements from $B(\gamma),\, \alpha_{ij}\neq 0,\, a_{ij}\in B(\gamma).$ From Lemma 5 it follows that $\{a_{ij}, \, i\in \Omega,\, 1\leq j\leq t_i\}$ is also a Cauchy set.
Hence $\sum a_i=\sum \alpha_{ij} a_{ij}.$

Let $\sum \alpha_b b=0,\, \alpha_b\in F,\, b\in B(\gamma).$ Suppose that $\alpha_{b_0}\neq 0\neq$ and $b_0\notin V_k.$ Then by Lemma 5 any finite subsum of $\sum \alpha_b b,$ containing $\alpha_{b_0} b_0,$ does not belong to $V_{k+1},$ a contradiction. This completes the proof of Lemma.
\end{proof}

We will need another general statement about sums in $\widehat{L}(\Gamma).$ Consider a nonzero converging sum $a=\sum\limits_{i\in \Omega} a_i\in \widehat{L}(\Gamma),\, a_i\in L(\Gamma).$
We say that the sum is \textit{reduced} if for any arbitrary nonempty subset $\Omega'\subset\Omega$ we have $\sum\limits_{i\in\Omega'} a_i\neq 0.$

\begin{lemma}
For an arbitrary nonzero converging sum $a=\sum\limits_{i\in \Omega} a_i,\, a_i\in L(\Gamma),$ there exists a nonempty subset $\Omega'\subset\Omega$ such that $a=\sum\limits_{i\in \Omega'} a_i$ and this sum is reduced.
\end{lemma}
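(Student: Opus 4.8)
The plan is to construct $\Omega'$ by a maximality/minimality argument. First I would observe that the obstruction to a subsum vanishing is controlled by Lemma 6: since $a = \sum_{i \in \Omega} a_i \neq 0$, when we rewrite everything in the topological basis $B(\gamma)$, there is at least one basis element $b_0 \in B(\gamma)$ whose total coefficient (summed over all $i \in \Omega$) is nonzero. Fix such a $b_0$, and let $\Omega_0 = \{ i \in \Omega \mid b_0 \text{ appears with nonzero coefficient in } a_i \}$. By the Cauchy condition, only finitely many $a_i$ can fail to lie in any given $V_k$; choosing $k$ large enough that $b_0 \notin V_k$, Lemma 5 forces every $a_i$ with $i \in \Omega_0$ to lie outside $V_{k}$ (more precisely, if $a_i \in V_{k+1}$ then all its basic constituents, including $b_0$, lie in $V_k$). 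Hence $\Omega_0$ is \emph{finite}.

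The key point is then: for \emph{any} nonempty $\Omega'' \subseteq \Omega$ with $\sum_{i \in \Omega''} a_i = 0$, we must have $\Omega'' \cap \Omega_0 = \emptyset$ — otherwise the coefficient of $b_0$ in $\sum_{i \in \Omega''} a_i$ would be $\sum_{i \in \Omega'' \cap \Omega_0} (\text{coeff of } b_0 \text{ in } a_i)$, but there is no reason this particular partial sum is nonzero, so this naive version fails. Instead I would argue as follows: consider the family $\mathcal{F}$ of all subsets $S \subseteq \Omega$ with the property that $\sum_{i \in S} a_i = a$. This family is nonempty ($\Omega \in \mathcal{F}$) and is closed under the operation of removing a subset $T \subseteq S$ whenever $\sum_{i \in T} a_i = 0$. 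I want a minimal element of $\mathcal{F}$ with respect to inclusion, and then show minimality forces the reduced property.

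To get a minimal element without Zorn-type worries about convergence, I would restrict attention to the finite set $\Omega_0$. Claim: there is a subset $\Omega' \subseteq \Omega$ with $\sum_{i \in \Omega'} a_i = a$ and $\Omega' \cap \Omega_0$ of minimal cardinality among all such subsets; since $\Omega_0$ is finite this minimum exists. I then claim this $\Omega'$ gives a reduced sum. Indeed, suppose $\emptyset \neq \Omega'' \subseteq \Omega'$ with $\sum_{i \in \Omega''} a_i = 0$. Then $\Omega' \setminus \Omega''$ still sums to $a$, and $(\Omega' \setminus \Omega'') \cap \Omega_0 = (\Omega' \cap \Omega_0) \setminus \Omega''$ has cardinality $\leq |\Omega' \cap \Omega_0|$, with equality iff $\Omega'' \cap \Omega_0 = \emptyset$. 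If $\Omega'' \cap \Omega_0 \neq \emptyset$ this contradicts minimality. So every vanishing subsum of $\Omega'$ is disjoint from $\Omega_0$; but then, having removed such a subsum, we could iterate — the issue is that removing an infinite disjoint-from-$\Omega_0$ subsum might not visibly decrease anything.

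Here is where the argument must be tightened, and I expect this to be the main obstacle: one needs a well-ordering / transfinite-induction argument, or a cleaner invariant, to discard \emph{all} vanishing subsums, not just those meeting $\Omega_0$. The clean fix is to iterate the whole construction: repeat the above using a \emph{different} witness basis element $b_1$ for any surviving vanishing subsum, and use a Zorn's lemma argument on the poset of subsets $S \subseteq \Omega$ with $\sum_{i \in S} a_i = a$, ordered by reverse inclusion, checking that a chain has an upper bound — i.e., that the intersection $\bigcap S_\alpha$ over a chain still sums to $a$. This last verification uses that for each $k$ all but finitely many $a_i$ are in $V_k$, so the "tail" of the sum over $\bigcap S_\alpha$ still converges to $a$ minus a finite part, and the finite part is pinned down by the chain condition. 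A maximal chain then yields a minimal $\Omega'$, and minimality directly gives the reduced property: any nonempty vanishing $\sum_{i \in \Omega''} a_i$ with $\Omega'' \subseteq \Omega'$ would let us delete $\Omega''$, contradicting minimality of $\Omega'$. I would write it this way, with the finiteness of $\Omega_0$ used to guarantee the chain-upper-bound step goes through.
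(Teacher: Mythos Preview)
Your final approach --- applying Zorn's lemma to the poset of subsets $S\subseteq\Omega$ with $\sum_{i\in S}a_i=a$, ordered by reverse inclusion, and using the Cauchy property (for each $k$, only finitely many $a_i\notin V_k$) to verify that the intersection of a chain still sums to $a$ --- is correct and is exactly the dual of the paper's argument. The paper instead applies Zorn to subsets with $\sum_{i\in S}a_i=0$ ordered by inclusion, obtains a maximal such $\Omega_{\max}$, and sets $\Omega'=\Omega\setminus\Omega_{\max}$; this is slightly cleaner than your version and avoids the detour through the basis element $b_0$ and the finite set $\Omega_0$, which (as you yourself observed) does not do any real work in the end.
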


\begin{proof}
Let $\Omega_1\subset\Omega_2\subset \cdots$ be an ascending chain of subsets of $\Omega$ such that $\sum\limits_{i\in\Omega_k}a_i=0$ for each $k.$ Denote $\widetilde{\Omega }=\cup_{k} \Omega_k.$

We claim that $\sum\limits_{i\in \Omega} a_i=0.$ Indeed, since the sum $\sum\limits_{i\in \Omega} a_i$ is convergent it follows that for an arbitrary $t\geq 1$ the set $\{i\in\Omega \mid a_i\notin V_t\}$ is finite.
Hence there exists $k\geq 1$ such that $a_i\in V_t$ for any $i\in \widetilde{\Omega}\setminus\Omega_k.$

Now, $\sum\limits_{i\in \widetilde{\Omega}}a_i=\sum\limits_{i\in\Omega_k}a_i+\sum\limits_{i\in\widetilde{\Omega}\setminus\Omega_k}a_i\in \overline{V_t}.$ This implies that $ \sum\limits_{i\in \widetilde{\Omega}}a_i\in \cap_{t\geq 1} \overline{V_t}=(0).$ By Zorn's Lemma there exists a maximal subset $\Omega_{max}\subset\Omega$ such that $\sum\limits_{i\in\Omega_{max}} a_i=0.$
Let $\Omega'=\Omega\setminus\Omega_{max}.$ Then $a=\sum\limits_{i\in\Omega'} a_i$ and this sum is reduced.
\end{proof}

\section{ Central Idempotents in $\widehat{L}(\Gamma)$}

\begin{lemma}
Let $p=e_1\cdots e_n$ be a special path and $r(e_n)\notin \{s(e_1),\cdots,s(e_n)\}.$ Then $n\leq |V|.$
\end{lemma}

\begin{proof}
If $n>|V|$ then some vertex on $p$ appears at least twice and this vertex is not $r(e_n).$ Hence, a subpath $p_1$ of $p$ is a cycle.
Since $r(e_n)$ does not lie in $V(p_1)$ it follows that some exit from the cycle $p_1$ is  special. But this is impossible since for every non-sink
vertex $v$ only one edge from $s^{-1}(v)$ is special.
\end{proof}

\begin{definition}
Let $W\subset V$ be a nonempty subset. We say that a path $p=e_1\cdots e_n,\, e_i\in E, $ is an \textit{arrival path} in $W$ if $r(p)\in W,$ and $\{s(e_1),\cdots, s(e_n)\}\nsubseteq W.$
In other words, $r(p)$ is the first vertex on $p$ that lies in $W.$ In particular, every vertex $w\in W,$ viewed as a path of zero length,  is an arrival path in $W.$ Let $Arr(W)$ be the set of all arrival paths in $W.$
\end{definition}

\begin{lemma}
The set $\{ pp^*\mid p\in Arr(W)\}$ is a Cauchy set.
\end{lemma}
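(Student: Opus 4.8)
The plan is to verify the definition of a Cauchy set directly: for every $k\ge 1$ I must show that only finitely many $p\in Arr(W)$ satisfy $pp^*\notin V_k$. Since $\Gamma$ is finite there are only finitely many paths of any prescribed length, so it suffices to prove that the condition $pp^*\notin V_k$ forces $l(p)$ to be smaller than a bound depending only on $k$ and $|V|$.

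The key step is a uniform bound on the special defect of an arrival path: \emph{$sd(p)\le |V|$ for every $p\in Arr(W)$}. To see this, write $p=e_1\cdots e_n$ and let $q=e_{i+1}\cdots e_n$ be its maximal special suffix, so that $sd(p)=l(q)$; if $l(p)=0$ there is nothing to prove. By the definition of an arrival path, $r(p)$ is the first vertex of $p$ lying in $W$, hence none of $s(e_1),\dots,s(e_n)$ lies in $W$, whereas $r(e_n)=r(p)\in W$. In particular $r(e_n)\notin\{s(e_{i+1}),\dots,s(e_n)\}=V(q)$, so Lemma 8, applied to the special path $q$, yields $sd(p)=l(q)\le |V|$.

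Now I place $pp^*$ in the filtration. Since $\deg(pp^*)=l(p)-l(p)=0$ and $l(p)+l(p)=2l(p)$, the element $pp^*$ lies in $V_{2l(p),\,2sd(p),\,0}$, and therefore in $V_k$ whenever $2l(p)\ge k(2sd(p)+0+1)$. By the previous paragraph this inequality holds as soon as $2l(p)\ge k(2|V|+1)$. Consequently, for a fixed $k$, every $p\in Arr(W)$ with $pp^*\notin V_k$ must satisfy $l(p)<k(2|V|+1)/2$; as remarked above there are only finitely many such paths, so $\{\,p\in Arr(W)\mid pp^*\notin V_k\,\}$ is finite. This is precisely the assertion that $\{pp^*\mid p\in Arr(W)\}$ is a Cauchy set.

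The only non-routine point is the special-defect bound $sd(p)\le|V|$. Its proof is short once one observes the right thing, namely that for an arrival path the maximal special suffix cannot revisit $r(p)$ — because $r(p)\in W$ while all the sources along $p$ avoid $W$ — which is exactly the hypothesis needed to invoke Lemma 8. Without the ``first vertex in $W$'' feature of arrival paths, an entirely special path of arbitrary length would already violate the Cauchy property, since for such a path $sd(p)=l(p)$ and the inequality $2l(p)\ge k(2l(p)+1)$ fails for every $k\ge1$.
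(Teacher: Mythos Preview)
Your proof is correct and follows essentially the same route as the paper's: you bound $sd(p)\le |V|$ via Lemma~8, note that $\deg(pp^*)=0$, and conclude that $pp^*\notin V_k$ forces $l(p)<k(|V|+\tfrac{1}{2})$, which is exactly the paper's bound. Your write-up is in fact more explicit than the paper's in justifying why Lemma~8 applies to the maximal special suffix of an arrival path.
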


\begin{proof}
We need to check that for an arbitrary $ k\geq 1$ the set $\{pp^* \mid p\in Arr(W)\}\setminus V_k$ is finite. If $p$ is an arrival path in $W,$ then by Lemma 8 $sd(pp^*)\leq 2|V|, d(pp^*)=0.$
Hence $\{pp^* \mid p\in Arr(W)\}\setminus V_k\subseteq \{pp^* \mid l(p)< k( |V|+\frac{1}{2})\}.$ Clearly, it is a finite set, which completes the proof.
\end{proof}

Denote $e(W)=\sum\limits_{p\in Arr(W)} pp^* \in\widehat{ L}(\Gamma).$

\begin{lemma}
If $W$ is a hereditary set, then $e(W)$ is a central idempotent in  $\widehat{ L}(\Gamma).$
\end{lemma}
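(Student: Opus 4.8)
The plan is to treat idempotency and centrality separately, using throughout that $\widehat{L}(\Gamma)$ is a topological algebra, so that multiplication is continuous and convergent series may be multiplied out and reindexed term by term; the legitimacy of these formal manipulations with series (in particular that $(\sum_p a_p)(\sum_q b_q)=\sum_{p,q}a_pb_q$ and that discarding the infinitely many zero terms of a Cauchy family does not change its sum) I would justify once, via the filtration properties in Lemma 2 and Lemma 3. It is also convenient to note at the outset that $e(W)^*=\sum_{p\in Arr(W)}(pp^*)^*=\sum_{p\in Arr(W)}pp^*=e(W)$, so $e(W)$ is self-adjoint; consequently, once we know $e(W)e=e\,e(W)$ for every $e\in E$, applying the involution gives $e^*e(W)=e(W)e^*$ as well, so the generators $e^*$ need no separate treatment.

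For idempotency I would expand $e(W)^2=\sum_{p,q\in Arr(W)}pp^*qq^*$ and show that every term equals $\delta_{p,q}\,pp^*$. A term with $p^*q=0$ vanishes. If $p^*q\ne 0$, then by Remark 1 one of $p,q$ is a continuation of the other; say $q=pq'$ (the case $p=qp'$ is symmetric). The key observation is that $q'$ must be a vertex: if $l(q')\ge 1$ then $s(q')=r(p)$ lies in $W$ — because $p\in Arr(W)$ forces $r(p)\in W$ — yet $s(q')$ is the source vertex of an edge of $q$ occurring strictly before $r(q)$, contradicting that $r(q)$ is the first vertex of $q$ lying in $W$. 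Hence $q=p$, and $pp^*pp^*=p(p^*p)p^*=p\,r(p)\,p^*=pp^*$. Summing over the diagonal gives $e(W)^2=\sum_{p\in Arr(W)}pp^*=e(W)$. Note that this step uses only the definition of arrival path, not hereditariness of $W$.

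For centrality it suffices, by continuity of multiplication and density of $L(\Gamma)$ in $\widehat{L}(\Gamma)$, to check that $e(W)$ commutes with each generator $v\in V$ and each $e\in E$. Commutation with a vertex is immediate from $pp^*v=v\,pp^*=\delta_{v,s(p)}\,pp^*$, which gives $e(W)v=\sum_{p\in Arr(W),\ s(p)=v}pp^*=v\,e(W)$. For an edge $e$ I would split according to whether $s(e)\in W$. If $s(e)\notin W$, one checks (via Remark 1) that the only $p\in Arr(W)$ with $pp^*e\ne 0$ are those of the form $p=ep'$ with $p'$ a path starting at $r(e)$ — the would-be term $p=s(e)$ being excluded since $s(e)\notin W$ — and then $pp^*e=e\,p'p'^*$; moreover $p'\mapsto ep'$ is a bijection from $\{p'\in Arr(W):s(p')=r(e)\}$ onto $\{p\in Arr(W):p=ep'\text{ for some path }p'\}$, since the forbidden source vertices of $ep'$ are exactly $s(e)$ together with those of $p'$ and $s(e)\notin W$. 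Hence $e(W)e=e\big(\sum_{p'\in Arr(W),\ s(p')=r(e)}p'p'^*\big)=e\big(r(e)\,e(W)\big)=(e\,r(e))\,e(W)=e\,e(W)$. If $s(e)\in W$, then — and this is the one place hereditariness enters — $r(e)\in W$ as well, being a descendant of $s(e)$; consequently no arrival path of positive length begins with $e$ or starts at $r(e)$, so only the vertex terms survive on each side, yielding $e(W)e=s(e)\,e=e$ and $e\,e(W)=e\cdot r(e)=e$. In all cases $e(W)e=e\,e(W)$, which completes the proof.

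The main obstacle I anticipate is the case analysis for edges in the centrality step: one must enumerate precisely which $p\in Arr(W)$ contribute to $pp^*e$ (respectively $e\,pp^*$), set up the bijection $p'\leftrightarrow ep'$ correctly, and isolate the fact that hereditariness is needed exactly to force $r(e)\in W$ when $s(e)\in W$ — which is what makes the two one-sided products collapse to $e$ and hence agree. A secondary but genuine point is making the infinite-series manipulations rigorous, for which one leans on the filtration lemmas established earlier.
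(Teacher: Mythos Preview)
Your proof is correct and follows essentially the same route as the paper's: both show the summands $pp^*$ are pairwise orthogonal idempotents via Remark~1, then verify centrality on generators using density of $L(\Gamma)$. Your case split on $s(e)\in W$ versus $s(e)\notin W$ merges the paper's three cases (which are organized by $r(e)$) into two, and your explicit use of $e(W)^*=e(W)$ to dispose of the generators $e^*$ is a nice touch the paper leaves implicit.
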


\begin{proof}
If $a,b$ are distinct elements from $\{ pp^*\mid p\in Arr(W)\}$ then $ab=ba$ by Remark 1.  Hence, $e(W)$ is a sum (possibly infinite) of pairwise orthogonal idempotents.
Hence $e(W)$ is an idempotent. Since $L(\Gamma)$ is dense in $\widehat{ L}(\Gamma)$ it is sufficient to show that $e(W)$ commutes with all vertices and all
edges of $\Gamma.$ For a vertex $v\in V$ let $Arr(v,W)=\{ p\in Arr(W)\mid s(p)=v\}.$  If $w\in W$ then $Arr(w,W)=\{w\}.$ It is easy to see that $v.e(W)=e(W).v=\sum\limits_{\scriptscriptstyle p\in Arr(v,W)} pp^*.$
Let $e\in E.$  We will consider $3$ cases:

\textbf{Case 1.} $r(e)\notin W.$ Then $e\sum\limits_{\scriptscriptstyle p\in Arr(W)}pp^*=e\sum\limits_{p\in Arr(r(e),W)}pp^*;$  $\sum\limits_{\scriptscriptstyle p\in Arr(W)}pp^*e=\sum\{pp^*e\mid p\in Arr(W), \text{ the first edge of } p\text{ is } e\}.$
It is easy to see that these two sums are equal.

\textbf{Case 2.} $r(e)\in W,\, s(e)\notin W.$ Then $e\in Arr(W).$ We have $e\sum\limits_{\scriptscriptstyle p\in Arr(W)} pp^*=er(e)r(e)^*=e;$ $\sum\limits_{\scriptscriptstyle p\in Arr(W)} pp^* e=ee^*e=e.$

\textbf{Case 3.} $r(e)\in W,\, s(e)\in W.$ In this case we again have \,\, $e\sum\limits_{\scriptscriptstyle p\in Arr(W)} pp^*=e;$ $\sum\limits_{\scriptscriptstyle p\in Arr(W)} pp^* e=s(e)s(e)^*e=e.$

\end{proof}

\section{Frames}

Let $ W$ a nonempty subset of $V$. We will define a graph $\Gamma^W=(V',E')$  as follows:

$V'=(V\setminus W)\cup \{w\},$ where $w$ is a new vertex, not belonging to $V;$ for two vertices $v_1,v_2\in V\setminus W,$ the set of edges $E'(\{v_1\},\{v_2\})$ is identified with $E(\{v_1\},\{v_2\});$
the set of edges $E'(\{v_1\},\{w\})$ is identified with $E(\{v_1\},W).$ For an edge $e\in E(\{v_1\},W)$ and its image $e'$ in $E'(\{v_1\},\{w\})$ we will say that $e'$ is the edge $e$ redirected to $w.$

\begin{remark}
Since all edges $E(W, V\setminus W)$ are ignored, the vertex $w$ is a sink in $\Gamma^W.$
\end{remark}

\begin{lemma}
Let $\Gamma=(V,E)$ be a finite graph with a sink $v$ which is a descendant of every vertex in $V.$
Then there exists a specialization $\gamma: V\setminus\{v\}\to E,$ such that the set of all special paths in $\Gamma$ is finite.
\end{lemma}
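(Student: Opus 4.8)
The plan is to introduce the ``distance to $v$'' as a potential function on $V$ and to define $\gamma$ so that traversing a special edge strictly decreases this potential; finiteness of the graph then forces special paths to be short, hence finitely many.

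First I would record that $v$ is necessarily the \emph{unique} sink of $\Gamma$. Indeed, if $w\neq v$ were a sink, then since $v$ is a descendant of $w$ there would exist a path in $Path(\{w\},\{v\})$; such a path has positive length (as $w\neq v$), hence must begin with an edge emanating from $w$, contradicting $s^{-1}(w)=\emptyset$. Consequently the domain $V\setminus\{v\}$ of a specialization consists exactly of those vertices having at least one outgoing edge, so the statement of the Lemma (which already writes the domain as $V\setminus\{v\}$) is consistent.

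Next, for each $u\in V$ put $d(u)=\min\{l(p)\mid p\in Path(\{u\},\{v\})\}$; this minimum is over a nonempty set because $v$ is a descendant of $u$, and $d(u)=0$ precisely when $u=v$. For $u\in V\setminus\{v\}$ a shortest path from $u$ to $v$ starts with some edge $e$ with $s(e)=u$, and minimality of $d(u)$ forces $d(r(e))=d(u)-1$; I would simply \emph{define} $\gamma(u)$ to be such an edge, chosen arbitrarily among them. With this choice every special edge $e$ satisfies $d(r(e))=d(s(e))-1$.

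Finally, let $p=e_1\cdots e_n$ be any special path. Since $r(e_i)=s(e_{i+1})$ and $d$ drops by exactly $1$ across each $e_i$, we get $d(s(e_1))=d(r(e_n))+n\geq n$. As $V$ is finite, $D:=\max_{u\in V}d(u)<\infty$, whence $n\leq D$; and since $E$ is finite there are only finitely many paths of length at most $D$. Hence the set of all special paths is finite. I do not foresee a genuine obstacle: the only point needing care is to choose $\gamma(u)$ along a \emph{shortest} path to $v$ (not merely some outgoing edge), which is exactly what makes $d$ strictly monotone along special edges, together with the preliminary observation that $v$ is the unique sink so that $\gamma$ is defined wherever it must be.
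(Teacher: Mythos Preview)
Your argument is correct. Defining $\gamma(u)$ to be the first edge of a shortest path from $u$ to the sink $v$ makes the potential $d(\cdot)$ strictly decrease along every special edge, so special paths have length at most $D=\max_{u\in V}d(u)$, and finiteness of $E$ finishes the job. The preliminary observation that $v$ is the unique sink (so that $V\setminus\{v\}$ is exactly the set of non-sinks) is also handled cleanly.

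This is a genuinely different route from the paper's proof. The paper argues by induction on $|V|$: it singles out the set $\{v_1,\dots,v_k\}$ of vertices with an edge into $v$, declares one such edge special for each $v_i$, forms the contracted graph $\Gamma'=\Gamma^{\{v_1,\dots,v_k,v\}}$ (in which the new vertex $w$ is again a sink reached from every vertex), applies the induction hypothesis to obtain a specialization $\gamma'$ on $\Gamma'$, and then lifts $\gamma'$ back to $\Gamma$; the final step is to exclude a special cycle by noting it could not pass through any $v_i$. Your approach is more elementary and self-contained: it avoids the contraction construction $\Gamma^W$ and the inductive bookkeeping entirely, replacing them with a one-line monotonicity argument. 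What the paper's approach buys, by contrast, is that it exercises the $\Gamma^W$ machinery that is reused immediately afterwards (in Lemma~13), so the induction-via-contraction dovetails with the surrounding development; your proof stands on its own but does not feed into that later argument.
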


\begin{proof}
Let $v_1,\cdots, v_k\in V$ be vertices such that $E(\{v_i\},\{v\})\neq\emptyset.$ In each set $E(\{v_i\},\{v\})$ choose one edge and declare it special.
All other edges coming out of $ v_1,\cdots, v_k$ are not special. Consider the graph $ \Gamma'=\Gamma^{\{v_1,\cdots, v_k, v\}}=(V', E'), V'=(V\setminus \{v_1,\cdots, v_k\})\cup\{w\}.$
Since $v$ is a descendant of an arbitrary vertex in $V$ it follows that $v$ is the only sink in $\Gamma.$
Similarly, $w$ is a descendant of an arbitrary vertex in $V',$ hence, $w$ is the only sink in $\Gamma'.$
Since $|V'|<|V|$ by the induction assumption there exists a specialization  $\gamma': V'\setminus\{w\}\to E'$ such that  the set of  special paths in $\Gamma'$ is finite.
Now we are ready to construct the specialization $\gamma: V\setminus\{v\}\to E.$
Choose a vertex $u\in V\setminus\{ v_1,\cdots, v_k, v\}$ and let $\gamma'(u)=e'\in E'.$
If $r(e')\neq w$ then $e'\in E(V\setminus\{ v_1,\cdots, v_k, v\},V\setminus\{ v_1,\cdots, v_k, v\})$ and we define $\gamma(u)=e'.$
Now let $r(e')=w.$ It means that there was an edge $u\xrightarrow{e}v_i,\, 1\leq i \leq k,$ that was redirected to $u\xrightarrow{e'}w.$
We let $\gamma(u)=e.$ If $u\in\{v_1,\cdots,v_k\}$ then at the  beginning of the proof we chose a special edge $u\to v.$
We claim that with the specialization $\gamma$ defined above there are finitely many special paths in $\Gamma.$

If  not, then $\Gamma$ contains a special cycle. This cycle can not involve any of the
vertices $v_1,\cdots,v_k,$ since special edges from $v_1,\cdots,v_k$ lead to $v,$ a sink. Hence this cycle lies in $\Gamma',$ which contradicts the induction assumption.
This proves the Lemma.
\end{proof}

Let $W$ be a minimal hereditary subset of $V.$ Then for any two vertices  $w_1,w_2\in W$  the vertex  $w_2$ is a descendant of  $w_1.$
Indeed, the set of all descendants of  $w_1$ is a hereditary subset of $V.$  In view of minimality of $W$ it contains $W.$ It implies that for any two minimal hereditary subset
$W_1, W_2$ either $W_1=W_2$ or $W_1\cap W_2=\emptyset.$
\bigskip

Let $W_1,\cdots, W_k$ be all distinct minimal hereditary subsets of $V.$ We will call the subsets $W_1,\cdots, W_k$ the \textit{frame} of $\Gamma.$

\begin{lemma}
Every vertex of $\Gamma$ has a descendant in $\un\limits_{i\geq 1}^k W_i.$
\end{lemma}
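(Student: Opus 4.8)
The plan is to show that the set of vertices having \emph{no} descendant in $\bigcup_{i=1}^k W_i$ is empty. Call this set $U$, so $U=\{v\in V\mid Path(\{v\},W_i)=\emptyset \text{ for all } i\}$. First I would observe that $U$ is hereditary: if $v\in U$ and $w$ is a descendant of $v$, then any descendant of $w$ is a descendant of $v$, so $w$ inherits the property of having no descendant in any $W_i$, whence $w\in U$. Thus either $U=\emptyset$, which is what we want, or $U$ is a nonempty hereditary subset of $V$.

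Next I would derive a contradiction from $U\neq\emptyset$. Since $V$ is finite, among the nonempty hereditary subsets contained in $U$ we may pick one that is minimal with respect to inclusion; any such set is in particular a minimal hereditary subset of $V$ (a hereditary subset of a hereditary set that is minimal among \emph{all} nonempty hereditary subsets it contains — here I should be slightly careful: I want a subset of $U$ that is minimal among \emph{all} nonempty hereditary subsets of $V$, not merely among those inside $U$). To get this, start with any $v_0\in U$ and let $W=\{$descendants of $v_0\}\cup\{v_0\}$; as noted in the paragraph preceding the Lemma, the descendant set of a vertex is hereditary, and $W\subseteq U$ by heredity of $U$. Now $W$ is a nonempty hereditary subset of $V$, so it contains some minimal hereditary subset $W'$ of $V$ (again using finiteness of $V$: descend through a chain of nonempty hereditary subsets until you can go no further). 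Then $W'$ is one of $W_1,\dots,W_k$, yet $W'\subseteq W\subseteq U$, so every vertex of $W'$ lies in $U$ and hence has no descendant in $\bigcup_{i} W_i$. But every vertex of $W'$ is a descendant of itself, i.e.\ it lies in $W'\subseteq\bigcup_i W_i$, contradicting membership in $U$. Therefore $U=\emptyset$.

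The main point requiring care — the only real obstacle — is the existence of a minimal hereditary subset inside an arbitrary nonempty hereditary set, i.e.\ making sure the two notions of minimality line up. This is handled by finiteness of $V$: the collection of nonempty hereditary subsets of $V$ contained in a given nonempty hereditary set is finite and nonempty, so it has a minimal element, and a set minimal in this collection is automatically minimal among \emph{all} nonempty hereditary subsets of $V$, since any nonempty hereditary subset of it would be a smaller member of the collection. Once this is in place, the argument is a short contradiction as above. Alternatively, one could phrase the whole thing constructively: starting from $v\in V$, repeatedly pass to a descendant, and use that any infinite path in a finite graph meets a cycle, whose vertices then generate (as their common descendant closure) a hereditary set in which one finds a minimal hereditary subset; but the contradiction-via-$U$ route is cleaner and avoids fussing over cycles.
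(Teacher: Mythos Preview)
Your argument is correct and follows exactly the paper's approach: define the set $U$ of vertices with no descendant in $\bigcup_i W_i$, note that $U$ is hereditary, and derive a contradiction from $U\neq\emptyset$ by observing that $U$ must then contain some $W_i$. The paper compresses this into two lines, taking for granted the step you carefully justify (that a nonempty hereditary subset of a finite graph contains a minimal hereditary subset); your detour through the descendant set of a chosen $v_0\in U$ is harmless but unnecessary, since $U$ itself is already hereditary.
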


\begin{proof}
The set of all vertices that do not have a descendant in $\un\limits_{i\geq 1}^k W_i$ is hereditary. If nonempty, then it contains one of the subsets $W_1,\cdots, W_k,$
a contradiction. This proves the Lemma.
\end{proof}

\begin{lemma}
There exists a specialization $\gamma:V\setminus\{sinks\}\to E$ such that the set of all special paths $p=e_1\cdots e_n$ with $s(e_1),\cdots, s(e_n)\notin \un\limits_{i\geq 1}^k W_i$ is finite.
\end{lemma}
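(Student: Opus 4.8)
The plan is to build the specialization $\gamma$ on $V\setminus\bigcup_{i=1}^k W_i$ and on each $W_i$ separately, and then glue, arguing that no special cycle can survive. First I would handle the vertices inside the minimal hereditary sets. Fix $i$ and consider the restriction of $\Gamma$ to $W_i$. Since $W_i$ is hereditary, no edge leaves $W_i$, so $\Gamma|_{W_i}$ is an honest subgraph; and since $W_i$ is \emph{minimal} hereditary, every vertex of $W_i$ is a descendant of every other vertex of $W_i$ (the remark preceding the statement). Thus $\Gamma|_{W_i}$ is strongly connected. On such a graph I just pick, for each $w\in W_i$, any one outgoing edge to be special; there is no sink, so this is a legitimate specialization of $\Gamma|_{W_i}$. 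The special subgraph then has every out-degree equal to $1$, so from any vertex there is a unique infinite special path, which must eventually repeat a vertex — in particular each $W_i$ \emph{does} carry an infinite special path, but that is fine, since the Lemma only controls special paths whose sources avoid $\bigcup W_i$.

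Next I would handle $V\setminus\bigcup_{i=1}^k W_i$. The idea is to pass to the contracted graph $\Gamma' = \Gamma^{W}$ where $W=\bigcup_{i=1}^k W_i$, collapsing all of $W$ to a single new sink $w$ and redirecting into $w$ every edge of $E(V\setminus W,\,W)$; here I use Remark~3, which says $w$ is a sink in $\Gamma'$. By Lemma~15, every vertex of $\Gamma$ has a descendant in $W$, so in $\Gamma'$ every vertex has $w$ as a descendant, i.e. $w$ is a sink that is a descendant of every vertex. Lemma~13 now applies to $\Gamma'$: there is a specialization $\gamma':V'\setminus\{w\}\to E'$ with only finitely many special paths in $\Gamma'$. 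I then pull $\gamma'$ back to a partial specialization on $V\setminus W$ exactly as in the proof of Lemma~13: if $\gamma'(u)$ is a genuine edge of $\Gamma$ (range not $w$), set $\gamma(u)$ equal to it; if $\gamma'(u)$ is an edge redirected to $w$, set $\gamma(u)$ equal to the original edge $u\to W$ it came from.

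Combining the two pieces gives a specialization $\gamma:V\setminus\{\text{sinks}\}\to E$ defined on all of $V\setminus\{\text{sinks}\}$ (on $W$ via the strongly-connected choice, on $V\setminus W$ via the pullback; note sinks of $\Gamma$ lie in $W$ by Lemma~15, so every non-sink really is assigned an edge). It remains to see that the special paths $p=e_1\cdots e_n$ with all sources outside $W$ are finitely many. Such a path lies entirely in the induced subgraph on $V\setminus W$: indeed if some $e_j$ had its range in $W$, then since $W$ is hereditary the range of $e_n$ would lie in $W$ and, more to the point, the source $s(e_{j+1})\in W$ (or $n=j$), contradicting that all sources avoid $W$ — so in fact no edge of $p$ enters $W$ at all, and every $e_j$ is an edge of $\Gamma$ whose both endpoints lie in $V\setminus W$. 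By construction, for a vertex $u\in V\setminus W$ with $\gamma(u)$ not entering $W$, the edge $\gamma(u)$ is precisely the edge $\gamma'(u)$ of $\Gamma'$; hence $p$, read in $\Gamma'$, is a special path of $\Gamma'$, and the map $p\mapsto p'$ is injective. Since $\Gamma'$ has only finitely many special paths, there are only finitely many such $p$, proving the Lemma. The main obstacle is the bookkeeping in this last paragraph — one must check carefully that a special path with sources outside $W$ genuinely never touches $W$ (so that every edge really does transfer to $\Gamma'$), and that the correspondence with special paths of $\Gamma'$ is well defined and injective; everything else reduces to Lemmas~13 and~15 and the structure of minimal hereditary sets.
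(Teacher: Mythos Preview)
Your argument is essentially the paper's own proof: collapse $W=\bigcup_i W_i$ to a single sink $w$, invoke the lemma about a graph with a sink descendant of every vertex (the paper's Lemma~11, your ``Lemma~13''; your ``Lemma~15'' is the paper's Lemma~12) to get $\gamma'$ on $\Gamma'=\Gamma^W$, pull it back to $V\setminus W$, and extend arbitrarily over the $W_i$. The paper does exactly this, only with less bookkeeping in the last step.

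One small slip in that bookkeeping, precisely where you flagged the obstacle: your claim that ``no edge of $p$ enters $W$ at all'' fails for the terminal edge. If $p=e_1\cdots e_n$ has all $s(e_j)\notin W$, then indeed $r(e_j)\notin W$ for $j<n$, but nothing prevents $r(e_n)\in W$. This does not damage the argument: in that case $e_n=\gamma(s(e_n))$ was obtained by un-redirecting $\gamma'(s(e_n))$, so $e_1\cdots e_{n-1}\,\gamma'(s(e_n))$ is still a special path in $\Gamma'$, and the map $p\mapsto p'$ (replace the last edge by its redirected version when $r(e_n)\in W$) is still injective, since a special path is determined by its sequence of sources. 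With that adjustment the proof is complete and coincides with the paper's.
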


\begin{proof}
Consider the graph $\Gamma'=\Gamma^{W_1\cup\cdots\cup W_k}=(V',E'),\, V'=(V\setminus(\un\limits_{i\geq 1}^k W_i))\cup\{w\}.$ This graph contains a sink $w,$ which is a descendant of all
vertices in $V'$ by Lemma 12. By Lemma 11 there exist a specialization $\gamma':V'\setminus\{w\}\to E'$ such that the set of all special paths in $\Gamma'$ is finite.

Let's define a specialization $\gamma:V\setminus\{sinks\}\to E.$ For non-sinks from $\un\limits_{i\geq 1}^k W_i$ define $\gamma$ arbitrarily. Choose a vertex $u\in V\setminus(\un\limits_{i\geq 1}^k W_i).$
Clearly, $u$ is not a sink in $\Gamma'.$ If $r(\gamma'(u))\neq w $ then we let $\gamma(u)=\gamma'(u).$ If $r(\gamma'(u))=w$ then $\gamma'(u)$ has been redirected from some edge $e\in E,\, s(e)=u,\, r(e)\in\un\limits_{i\geq 1}^k W_i.$ Let $\gamma(u)=e.$ If $p=e_1\cdots e_n$ is a special path in $\Gamma$ such that $s(e_1),\cdots, s(e_n)\in V\setminus( \un\limits_{i\geq 1}^k W_i)$ then $p$ can be viewed as a special path in $\Gamma'.$ Since there are finitely many such paths, this completes the proof of the Lemma.
\end{proof}

From now on we will talk only about specializations that satisfy the condition of Lemma 13.

\begin{lemma}
Let $W'\subset W''\subset V$ be hereditary subsets such that every vertex from $W''$ has a descendant in $W'.$ Then $e(W')=e(W'').$
\end{lemma}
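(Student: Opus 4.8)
The plan is to show that the two Cauchy sums $e(W')=\sum_{p\in Arr(W')}pp^*$ and $e(W'')=\sum_{q\in Arr(W'')}qq^*$ coincide by reorganizing the first sum into ``blocks'' indexed by arrival paths in $W''$. The key observation is that since $W'\subset W''$, every arrival path $p$ in $W'$ either already lands in $W''$ before reaching $W'$ or passes through $W''$; more precisely, $p$ factors uniquely as $p=q p_1$ where $q\in Arr(W'')$ (the beginning of $p$ that first hits $W''$) and $p_1$ is a path from $r(q)\in W''$ to $r(p)\in W'$ with all intermediate vertices — including $s$ of its first edge, namely $r(q)$ — lying in $W''$ (using that $W''$ is hereditary, so once inside $W''$ we stay inside). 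Conversely, for a fixed $q\in Arr(W'')$, the paths $p_1$ with $s(p_1)=r(q)$, $r(p_1)\in W'$, and all vertices in $W''$ are exactly the arrival paths in $W'$ within the ``restricted graph'' on $W''$, starting at $r(q)$.

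First I would make the factorization $p=qp_1$ precise and check it is a bijection between $Arr(W')$ and the disjoint union over $q\in Arr(W'')$ of the set $A_q:=\{p_1\in Path(\Gamma)\mid s(p_1)=r(q),\ r(p_1)\in W',\ V(p_1)\cup\{r(p_1)\}\subseteq W''\}$. Then, using Remark 1 and the relation $q^*q=r(q)$, I would compute $\sum_{p\in Arr(W')}pp^*=\sum_{q\in Arr(W'')}\sum_{p_1\in A_q} q\,p_1p_1^*\,q^* = \sum_{q\in Arr(W'')} q\Big(\sum_{p_1\in A_q}p_1p_1^*\Big)q^*$ (the rearrangement of a Cauchy sum is justified as in Lemma 7, since multiplication by the fixed elements $q,q^*$ is continuous by Lemma 4, and the family is Cauchy). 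So it suffices to prove that for each $q\in Arr(W'')$ the inner sum $\sum_{p_1\in A_q}p_1p_1^*$ equals $r(q)$.

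The heart of the matter is thus a local statement inside $W''$: if $v=r(q)\in W''$ and every vertex of $W''$ has a descendant in $W'$, then $\sum\{p_1p_1^*\mid s(p_1)=v,\ r(p_1)\in W',\ \text{all vertices of }p_1\text{ in }W''\}=v$ in $\widehat{L}(\Gamma)$. I would prove this by a ``flow'' argument: starting from $v=\sum_{s(e)=v}ee^*$ (relation (4)) — valid since $v\in W''$ has a descendant in $W'\neq\emptyset$, so $v$ is not a sink — one repeatedly expands each term $ee^*$ with $r(e)\in W''\setminus W'$ via $r(e)=\sum_{s(f)=r(e)}ff^*$, i.e.\ $ee^*=e\big(\sum ff^*\big)e^*=\sum (ef)(ef)^*$, while leaving terms with $r(e)\in W'$ untouched. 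Each expansion step only uses edges inside $W''$ (since $W''$ is hereditary, $r(e)\in W''$), strictly increases the minimal length of the not-yet-terminated paths, and — because every vertex of $W''$ reaches $W'$ and the graph is finite — cannot expand the same configuration forever without a terminating branch appearing; one has to check the partial sums form a Cauchy net converging to the claimed series, which holds because after $k$ steps all un-terminated terms $pp^*$ have $l(p)\ge k$ hence (bounded $sd$ by Lemma 8, degree $0$) lie in $V_k$. The main obstacle is precisely this convergence/termination bookkeeping: showing the expansion process exhausts $A_q$ exactly once (no path omitted, none double-counted) and that the ``leftover'' after $k$ steps tends to $0$; the finiteness of $\Gamma$ together with the hypothesis that every vertex of $W''$ has a descendant in $W'$ is what prevents an infinite non-terminating branch from carrying positive ``mass,'' and this is where the argument must be done carefully rather than by the routine estimates.
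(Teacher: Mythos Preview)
Your overall strategy coincides with the paper's: factor each arrival path in $W'$ through its initial segment in $Arr(W'')$, and for each $v\in W''$ prove $v=\sum_{p\in Arr(v,W')}pp^*$ by iterating the Cuntz--Krieger relation. The paper carries this out via the same expansion process (their sets $P_0,P_1,\ldots$ are exactly your ``flow'' iterates).

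There is, however, a real gap in your convergence step. You claim the un-terminated terms $pp^*$ (those with $r(p)\in W''\setminus W'$) have bounded $sd$ ``by Lemma~8''. But Lemma~8 bounds only special paths whose range is \emph{not} among their sources; for an un-terminated path every vertex, including $r(p)$, lies in $W''\setminus W'$, so nothing rules out a special cycle inside $W''\setminus W'$, in which case $sd$ is unbounded and the leftover does not tend to~$0$. (For a generic specialization this actually happens, and then the lemma fails.) The paper closes this gap with an extra observation you are missing: since every vertex of $W''$ has a descendant in $W'$, each minimal hereditary subset $W_i$ satisfies $W_i\cap W''=\emptyset$ or $W_i\subseteq W'$, hence $W''\setminus W'$ is disjoint from the frame $\bigcup_i W_i$; combined with the standing hypothesis that $\gamma$ satisfies the condition of Lemma~13, this forces the set of special paths inside $W''\setminus W'$ to be finite, and only then is $sd$ bounded on the un-terminated terms. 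A smaller point: your $A_q$ should require the source vertices of $p_1$ to lie in $W''\setminus W'$ (i.e.\ $p_1\in Arr(r(q),W')$), not merely in $W''$; otherwise $p\mapsto(q,p_1)$ is not a bijection with $Arr(W')$, though your expansion process does in fact produce the correct set.
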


\begin{proof}
Let $W_1,\cdots,W_k$ be the frame of the graph $\Gamma$. Let $\gamma:V\setminus\{sinks\}\rightarrow E$ be a specialization that satisfies the condition of Lemma 13.

Since every vertex in $W''$ has a descendant in $W'$ it follows  that for an arbitrary minimal hereditary  subset $W_i$ either $W_i \cap W''=\phi$ or $W_i \subseteq W'.$ Choose a vertex $v \in W'' \setminus W'.$ As above we denote $Arr(v,W')=\{p \in Arr(W')\mid s(p)=v.\}$ We claim that $v=\sum\limits_{\scriptscriptstyle p \in Arr(v,W')}pp^*.$ To prove this equality we will define a sequence of finite sets of paths $P_0, P_1,\cdots$. Let $P_0=\{v\}$. If $P_n$ has been constructed then $P_{n+1}$ is defined in the following way. Let $p \in P_n$. If $r(p) \in W'$ then $p \in P_{n+1}$. Let $r(p)\in W''\setminus W'.$ Then $r(p)$ is not a sink (the set $W'' \setminus W'$ does not contain sinks). Let $e_1,\ldots,e_q$ be all edges with the source at $r(p)$. Then $pe_1,...,pe_q \in P_{n+1}$. Thus $P_{n+1}=\{p \in P_n,r(p) \in W'\}\dot{\cup} \{pe \mid p\in P_n,r(p) \in W''\setminus W', s(e)=r(p)\}. $
 For an arbitrary $n\geq 0$ we have $v=\sum\limits_{\scriptscriptstyle p \in P_n}pp^*.$ If $p \in P_n$ and $r(p) \in W'$ then $p$ is an arrival path in $W'$. By Lemma 10 all $sd(p), p \in \un\limits_{n\geq 0}P_n$, are uniformly bounded from above. Hence
 $\sum\limits_{\scriptstyle p\in P_n\setminus Arr(W')}pp^*\to  0.$ It follows that $$v=\lim \limits_{n \to \infty} \sum\limits_{\scriptstyle p\in P_n\cap Arr(W')}pp^*=\sum\limits_{\scriptstyle p\in Arr(v,W')}pp^*.$$

 Now, $$e(W')=\sum\limits_{\scriptstyle p\in Arr(W'')}p\left(\sum\limits_{\scriptstyle p_1\in  Arr(r(p),W')}p_1p^*_1\right)p^*=\sum\limits_{\scriptstyle p\in  Arr(W'')}pp^*=e(W'').$$

\end{proof}

 Let $W$ be a nonempty hereditary subset of $V$. Let $W^\bot \subset V$ consist of those vertices which do not have descendants in $W$. Clearly, $W^\bot$ is a hereditary subset of $V$.

\begin{lemma}
The idempotents $e(W),e(W^\bot)$ are orthogonal and $e(W)+e(W^\bot)=1$ ( if $W^\bot=\phi$ then we let $e(W^\bot)=0.$)

\end{lemma}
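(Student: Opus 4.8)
The plan is to verify the two assertions — orthogonality $e(W)e(W^\bot)=0$ and the partition of unity $e(W)+e(W^\bot)=1$ — by working with the defining series $e(W)=\sum_{p\in Arr(W)}pp^*$ and $e(W^\bot)=\sum_{q\in Arr(W^\bot)}qq^*$ and exploiting Remark 1. For orthogonality, a typical term of the product is $pp^*qq^*$ with $p\in Arr(W)$ and $q\in Arr(W^\bot)$; by Remark 1 this is nonzero only when one of $p,q$ is a continuation of the other, say $q=pq'$. But then $r(p)=s(q')$ lies on the path $q$, and since $q$ is an arrival path in $W^\bot$ its only vertex in $W^\bot$ is $r(q)$, so $r(p)\in W^\bot$ unless $p=q$. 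On the other hand $p\in Arr(W)$ forces $r(p)\in W$, and $W\cap W^\bot=\emptyset$ since a vertex of $W$ is trivially a descendant of itself and hence cannot lie in $W^\bot$; the case $p=q$ is likewise excluded because $r(p)$ cannot be in both $W$ and $W^\bot$. The symmetric case $p=qp'$ is handled the same way. Hence every term vanishes and $e(W)e(W^\bot)=0$.

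For the partition of unity, the key observation is that $Arr(W)\cup Arr(W^\bot)$, together with a suitable completion argument, exhausts the "tree of paths" emanating from each vertex. Concretely I would fix a vertex $v\in V$ and, mirroring the construction in the proof of Lemma 15, build a sequence of finite sets of paths $P_0=\{v\},P_1,P_2,\dots$: given $P_n$, a path $p\in P_n$ is retained in $P_{n+1}$ if $r(p)\in W\cup W^\bot$ (it is then an arrival path in whichever of the two it meets — note $W$ and $W^\bot$ are disjoint), and is otherwise replaced by the paths $pe$ ranging over all edges $e$ with $s(e)=r(p)$. This last step is legitimate because a vertex $u\notin W\cup W^\bot$ is never a sink: if $u$ were a sink it would be its own only descendant, so $u$ having no descendant in $W$ would put $u\in W^\bot$. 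Using relation (4) one shows by induction that $v=\sum_{p\in P_n}pp^*$ for every $n$. Summing over all $v\in V$ and using relation (1) gives $1=\sum_{v\in V}v=\sum_{p\in P_n^{\mathrm{all}}}pp^*$ where $P_n^{\mathrm{all}}=\bigcup_v P_n$.

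It remains to see that the "unresolved" part tends to $0$, i.e. that $\sum_{p\in P_n^{\mathrm{all}},\ r(p)\notin W\cup W^\bot}pp^*\to 0$ in $\widehat L(\Gamma)$, so that in the limit $1=\sum_{p\in Arr(W)}pp^*+\sum_{q\in Arr(W^\bot)}qq^*=e(W)+e(W^\bot)$. Here the main obstacle, and the place requiring genuine care, is the convergence control: one needs that the lengths of the still-unresolved paths grow without bound while their $sd$-values stay uniformly bounded, so that these terms eventually enter every $V_k$. The length growth is immediate from the construction (an unresolved path in $P_n$ has length $\ge n$ minus a fixed amount, since resolved paths are frozen), and the uniform bound on $sd$ is exactly the content of Lemma 8 once one checks — as in Lemma 10 and Lemma 15 — that any path whose source-vertices avoid the "absorbing" set $W\cup W^\bot$ cannot contain a special cycle, invoking the hypothesis of Lemma 13 on the chosen specialization together with the fact that $W\cup W^\bot$ contains all the minimal hereditary sets $W_1,\dots,W_k$ (indeed each $W_i\subseteq W$ or $W_i\subseteq W^\bot$, since $W_i$ is either contained in $W$ or, being minimal, has no descendant — hence no vertex — in $W$). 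With that bound in hand the Cauchy argument of Lemma 10 applies verbatim, finishing the proof.
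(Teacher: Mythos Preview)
Your orthogonality argument has a logical slip. From $q=pq'$ you correctly note that $r(p)$ is a vertex on $q$ and that the only vertex of $q$ lying in $W^\bot$ is $r(q)$; but this yields $r(p)\notin W^\bot$ (when $p\neq q$), which is \emph{compatible} with $r(p)\in W$, not contradictory. The missing ingredient is hereditariness: since $r(p)\in W$ and $W$ is hereditary, the endpoint $r(q)=r(q')$ is a descendant of $r(p)$ and hence lies in $W$, contradicting $r(q)\in W^\bot$. (Symmetrically, if $p=qp'$ then $r(q)\in W^\bot$ and $W^\bot$ hereditary force $r(p)\in W^\bot$.) This is exactly the paper's one-line observation that neither arrival path can be a continuation of the other.

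For the identity $e(W)+e(W^\bot)=1$, your argument is correct but reproduces from scratch the tree-expansion and convergence estimate already packaged in the preceding lemma (the one asserting $e(W')=e(W'')$ whenever $W'\subset W''$ are hereditary and every vertex of $W''$ has a descendant in $W'$). The paper instead simply observes that $W\cup W^\bot$ is hereditary, that every vertex of $V$ has a descendant in $W\cup W^\bot$ (trivially: if $v$ has no descendant in $W$ then $v\in W^\bot$), and then invokes that lemma with $W'=W\cup W^\bot$, $W''=V$ to obtain $e(W)+e(W^\bot)=e(W\cup W^\bot)=e(V)=1$. Your route has the virtue of making the convergence mechanism (and the role of $\bigcup_i W_i\subseteq W\cup W^\bot$) explicit once more, but the paper's reduction is shorter and avoids duplicated effort.
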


\begin{proof}
If $p,q$ are arrival paths to $W,W^\bot$ respectively then none of them is a continuation of the other one. Hence $p^*q=q^*p=0$. It implies that $e(W)e(W^\bot)=e(W^\bot)e(W)=0$ .\\
An arbitrary vertex from $V$ has a descendant  in $W \cup W^\bot$. Indeed, if $v \in V$ and $v$ does not have descendants in $W$ then $ v \in W^\bot$. By Lemma14, $e(W)+e(W^\bot)=e(W \cup W^\bot)=e(V)=1.$ This finishes the proof of the Lemma.
\end{proof}

\begin{corollary}
$e(W)=e((W^\bot)^\bot).$

\end{corollary}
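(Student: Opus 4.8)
The plan is to obtain the Corollary directly from Lemma 14 applied to the pair $W'=W$ and $W''=(W^\bot)^\bot$, so that the work reduces to checking the two hypotheses of that lemma.

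First I would record the set-theoretic facts about $W^\bot$. Since $W$ is hereditary, every descendant of a vertex $v\in W$ again lies in $W$; as $v$ is (trivially) a descendant of itself, no vertex of $W$ lies in $W^\bot$, i.e.\ $W\cap W^\bot=\emptyset$, and hence no descendant of any $v\in W$ lies in $W^\bot$ either. This shows $W\subseteq(W^\bot)^\bot$. Moreover $(W^\bot)^\bot$ is hereditary by exactly the argument used for $W^\bot$ in the paragraph preceding Lemma 15, and it is nonempty since it contains $W$.

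Next I would verify the descendant condition required by Lemma 14, namely that every vertex of $(W^\bot)^\bot$ has a descendant in $W$. For this I reuse the observation already made inside the proof of Lemma 15, that an arbitrary vertex of $V$ has a descendant in $W\cup W^\bot$ (if $v$ has no descendant in $W$ then $v\in W^\bot$ by definition, and $v$ is its own descendant). Applying this to a vertex $v\in(W^\bot)^\bot$: it has some descendant $u\in W\cup W^\bot$, and $u\notin W^\bot$ by the defining property of $(W^\bot)^\bot$, so $u\in W$. With $W\subseteq(W^\bot)^\bot$, both sets hereditary, and every vertex of $(W^\bot)^\bot$ admitting a descendant in $W$, Lemma 14 yields $e(W)=e((W^\bot)^\bot)$.

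I do not expect a genuine obstacle: each step only unwinds the definition of $(\,\cdot\,)^\bot$ and quotes facts already established, so the single point needing a word of care is the degenerate case $W^\bot=\emptyset$, where one takes $e(W^\bot)=0$ and reads $(W^\bot)^\bot$ as $V$, giving $e(W)=1=e(V)$ by Lemma 15, consistently with the statement. An even shorter route, if one prefers, is to invoke Lemma 15 twice: $e(W)+e(W^\bot)=1=e(W^\bot)+e((W^\bot)^\bot)$, and then subtract $e(W^\bot)$.
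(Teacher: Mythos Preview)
Your proposal is correct. The paper states the Corollary immediately after Lemma~15 with no separate proof, so the intended argument is precisely your ``shorter route'': apply Lemma~15 once to $W$ and once to $W^\bot$ to get $e(W)+e(W^\bot)=1=e(W^\bot)+e((W^\bot)^\bot)$, then cancel $e(W^\bot)$. Your main approach via Lemma~14 is a legitimate alternative; in fact the verification you carry out --- that $W\subseteq (W^\bot)^\bot$, that $(W^\bot)^\bot$ is hereditary, and that every vertex of $(W^\bot)^\bot$ has a descendant in $W$ --- is exactly the content of Lemma~16, which the paper proves immediately after the Corollary. So your longer argument effectively merges the Corollary with Lemma~16, while the paper separates them and deduces the Corollary purely from the idempotent identity of Lemma~15. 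Both routes are sound; the subtraction argument is shorter and matches the paper's placement, whereas your Lemma~14 route is more self-contained and makes the underlying set-theoretic reason for the equality explicit.
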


\begin{lemma}
$(W^\bot)^\bot$ is the largest hereditary subset of $V$ such that every vertex of it has a descendant in $W$.
\end{lemma}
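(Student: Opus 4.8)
The plan is to verify three things about $(W^\bot)^\bot$: (i) it is hereditary; (ii) every vertex of it has a descendant in $W$; and (iii) if $U\subseteq V$ is any hereditary subset all of whose vertices have a descendant in $W$, then $U\subseteq(W^\bot)^\bot$. Assertions (i) and (ii) say that $(W^\bot)^\bot$ itself is a hereditary set with the stated property, and (iii) says it contains every such set, so together they give that it is the largest one.

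The two elementary facts I would lean on are that every vertex is a descendant of itself (it is a path of length $0$ from $v$ to $v$) and that the descendant relation is transitive (concatenate the two paths). From transitivity one gets, for any subset $X\subseteq V$, that $X^\bot$ is hereditary: if $v\in X^\bot$ and $u$ is a descendant of $v$, then every descendant of $u$ is a descendant of $v$, hence not in $X$, so $u\in X^\bot$. Taking $X=W^\bot$ yields (i).

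For (ii) I would argue by contradiction: if $v\in(W^\bot)^\bot$ had no descendant in $W$, then by definition $v\in W^\bot$, and since $v$ is a descendant of itself this exhibits a descendant of $v$ lying in $W^\bot$, contradicting $v\in(W^\bot)^\bot$. For (iii), let $U$ be hereditary with every vertex of $U$ having a descendant in $W$, and suppose some $v\in U$ is not in $(W^\bot)^\bot$; then $v$ has a descendant $u\in W^\bot$, and since $U$ is hereditary, $u\in U$, so by hypothesis $u$ has a descendant in $W$ — contradicting $u\in W^\bot$. Hence $U\subseteq(W^\bot)^\bot$, as wanted.

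I do not expect a genuine obstacle here; the only point demanding care is to use consistently the convention that each vertex is its own descendant, which is exactly what makes the double orthogonal complement collapse onto the right set (the same convention that underlies $W\cap W^\bot=\emptyset$ and Lemma 15). As an alternative one could route the argument through the central idempotents $e(W)$, $e(W^\bot)$ and Corollary 1, but the direct combinatorial argument above is the shortest.
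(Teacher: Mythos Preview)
Your proof is correct and follows essentially the same route as the paper's: the paper obtains (ii) from $(W^\bot)^\bot\cap W^\bot=\emptyset$, which is your contradiction argument rephrased, and proves (iii) by exactly the same hereditary-closure contradiction you give. Your explicit verification of (i) is a minor addition, since the paper had already noted just before Lemma~15 that $X^\bot$ is hereditary for any $X$.
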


\begin{proof}
 Since $(W^\bot)^{\bot}\cap W^\bot=\phi$ we conclude that every vertex from $(W^\bot)^{\bot}$ has a descendent in  $W$. Now let $U \subseteq V$ be a nonempty hereditary subset such that every vertex from $U$ has a descendant in $W$. In order to prove $V \subseteq (W^\bot)^{\bot}$ we need to show that no vertex $u \in U$ can have a descendant in  $W^\bot$. Let $v$ be a descendant of the vertex $u$ that lies in $W^\bot$ . Since $U$ is hereditary it follows that $v \in U$. Hence, $v$ has a descendant in $W$. It contradicts the inclusion $v \in W^\bot$ and completes the proof.
\end{proof}

The closed ideal of the algebra $ \widehat{L}(\Gamma)$ generated by the hereditary subset $W \subset V$ consists of (possibly infinite) converging sums $\sum \alpha_{pq}pq^*$, where $\alpha_{pq}\in F; p,q \in Path(\Gamma), r(p)=r(q) \in W.$ We will denote this ideal as $I(W).$

\begin{lemma}
$I(W)=e(W)\widehat{L}(\Gamma).$
\end{lemma}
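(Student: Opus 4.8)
The plan is to prove the inclusions $e(W)\widehat{L}(\Gamma)\subseteq I(W)$ and $I(W)\subseteq e(W)\widehat{L}(\Gamma)$ separately. The two ingredients are that $e(W)$ is a central idempotent (Lemma~10) and that $I(W)$ is, by its very definition, the smallest closed two-sided ideal of $\widehat{L}(\Gamma)$ containing $W$, so it is closed, it is an ideal, and it contains the ordinary two-sided ideal generated by $W$.

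The one step that needs care is to verify that $e(W)\widehat{L}(\Gamma)$ is itself a closed two-sided ideal. Two-sidedness is immediate from the centrality of $e(W)$. For closedness, since $e(W)^{2}=e(W)$ we have $e(W)\widehat{L}(\Gamma)=\{a\in\widehat{L}(\Gamma)\mid e(W)a=a\}=\{a\in\widehat{L}(\Gamma)\mid (1-e(W))a=0\}$, so this set is the kernel of left multiplication by the fixed element $1-e(W)$. That map is continuous because multiplication in the topological algebra $\widehat{L}(\Gamma)$ is continuous, and $(0)$ is closed because $\widehat{L}(\Gamma)$ is Hausdorff (Lemma~4, together with the fact, already used in the proof of Lemma~7, that the completion of a Hausdorff topological algebra is Hausdorff). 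Hence $e(W)\widehat{L}(\Gamma)$ is closed.

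Granting this, the inclusion $I(W)\subseteq e(W)\widehat{L}(\Gamma)$ follows once we check $W\subseteq e(W)\widehat{L}(\Gamma)$. For $w\in W$ one has $Arr(w,W)=\{w\}$ (as recorded in the proof of Lemma~10), hence $e(W)w=ww^{*}=w$, so $w\in e(W)\widehat{L}(\Gamma)$. Thus $e(W)\widehat{L}(\Gamma)$ is a closed two-sided ideal containing $W$, and therefore it contains $I(W)$.

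For the reverse inclusion it suffices to show $e(W)\in I(W)$, since then $e(W)\widehat{L}(\Gamma)\subseteq I(W)$ because $I(W)$ is a right ideal containing $e(W)$. Now $e(W)=\sum_{p\in Arr(W)}pp^{*}$ is a convergent sum in $\widehat{L}(\Gamma)$ by Lemma~9, and each term can be written $pp^{*}=p\cdot r(p)\cdot p^{*}$ with $r(p)\in W$, so it lies in the two-sided ideal generated by $W$ and hence in $I(W)$; since $I(W)$ is closed, it contains the limit $e(W)$ of the finite partial sums. Combining the two inclusions gives $I(W)=e(W)\widehat{L}(\Gamma)$. I expect no real obstacle beyond the Hausdorffness point in the second paragraph, which is what makes the kernel description of $e(W)\widehat{L}(\Gamma)$ yield a closed set; everything else is formal bookkeeping with Lemmas~9 and~10.
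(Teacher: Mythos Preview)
Your proof is correct and follows essentially the same two-inclusion argument as the paper: show $w=e(W)w$ for each $w\in W$ to get $I(W)\subseteq e(W)\widehat{L}(\Gamma)$, and show $e(W)\in I(W)$ (because every arrival path ends in $W$) to get the reverse inclusion. The only difference is that you explicitly justify why $e(W)\widehat{L}(\Gamma)$ is a \emph{closed} ideal (via the kernel description and Hausdorffness), a point the paper leaves implicit; this extra care is appropriate, since without it the inclusion $I(W)\subseteq e(W)\widehat{L}(\Gamma)$ would not follow from $W\subseteq e(W)\widehat{L}(\Gamma)$ alone.
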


\begin{proof}
For an arbitrary vertex $w \in W$ we have $w=w e(W).$ Hence $W \subset e(W)\widehat{L}(\Gamma)$ and $I(W) \subseteq e(W)\widehat{L}(\Gamma).$ The inclusion $e(W) \in I(W)$ follows from the fact that every arrival path in $W$ ends with a vertex from $W$. This finishes the proof of the Lemma.
\end{proof}

Lemma 17 implies that the ideal $I(W)$ is a direct summed of the algebra $\widehat{L}(\Gamma)=I(W)\oplus I(W^\bot)$ and that $\widehat{L}(\Gamma)=I(W_1)\oplus \cdots \oplus I(W_k).$ Now our aim is to decompose $\widehat{L}(\Gamma)$ as a direct sum of minimal ideals.

\section{Completions of simple leavitt path algebras and the ideals $I(W_i).$}

Recall that $\gamma:V\rightarrow E$ is a fixed specialization of the algebra $\Gamma.$ If $W$ is a hereditary subset of $V$ then $\gamma(W)\subseteq E(W,W).$\\
For a vertex $v \in V$ define a special path $g_v (n)$ inductively. Let $g_v (0)=v.$ If $r (g_v (n))$ is not a sink then $g_v (n+1)=g_v (n)\gamma (r(g_v (n)))$. If $r(g_v (u))$ is a sink then $g_v (n+1)=g_v (n)$.

\begin{lemma}
$\{g_v (n)g_v (n)^*,n\geq 0\}$ is a Cauchy set.
\end{lemma}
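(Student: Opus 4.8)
The statement to prove is that $\{g_v(n)g_v(n)^* \mid n \geq 0\}$ is a Cauchy set, i.e. for every $k \geq 1$ the set $\{g_v(n)g_v(n)^* \mid n \geq 0\} \setminus V_k$ is finite. The strategy mirrors the proof of Lemma 10: I would analyze the three invariants $l$, $sd$ and $\deg$ attached to the basic element $g_v(n)g_v(n)^*$ and show that all but finitely many of them lie in each $V_k$. Since $g_v(n)$ is a path and $g_v(n)g_v(n)^*$ is of the required form $pq^*$ with $p = q = g_v(n)$, we immediately get $\deg(g_v(n)g_v(n)^*) = 0$ and $l(g_v(n)) + l(g_v(n)) = 2\,l(g_v(n))$. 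So the only real issue is to bound $sd(g_v(n)g_v(n)^*) = 2\,sd(g_v(n))$ uniformly in $n$.

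First I would observe that $g_v(n)$ is by construction a special path (each edge adjoined is $\gamma(r(\cdot))$, which is special). There are two cases. If $r(g_v(n))$ eventually becomes a sink, then the sequence $g_v(n)$ stabilizes and the set $\{g_v(n)g_v(n)^*\}$ is finite, hence trivially Cauchy. Otherwise $g_v(n)$ keeps growing: $l(g_v(n)) \to \infty$. In that case I would argue that $g_v(n)$ has a \emph{bounded} special-depth tail, and in fact invoke the structure of special paths under the standing assumption (Lemma 13 and the paragraph following it): recall we only consider specializations for which the special paths whose vertices avoid $\bigcup_{i=1}^k W_i$ form a finite set. The key point is that a special path $g_v(n) = e_1 \cdots e_n$ consists of special edges; writing $sd(g_v(n)) = n - i$ where $i$ is minimal with $e_{i+1}\cdots e_n$ special — but here the \emph{whole} path is special, so naively $sd(g_v(n)) = n$. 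This would seem to block the argument, so I must instead use that $g_v(n)g_v(n)^*$, as an element of the algebra, may not be a basic element of $B(\gamma)$, or more to the point, use that the invariant entering the definition of $V_{n,s,d}$ measures $sd(p)+sd(q)$ and compare via $V_{n,s,d} \subseteq V_{(n-s)}$ from Lemma 4.

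Here is the cleaner route I would actually take. The element $g_v(n)g_v(n)^*$ lies in $V_{2l_n,\,2n,\,0}$ where $l_n = l(g_v(n))$, which by the inclusion $V_{n,s,d}\subseteq V_{(n-s)}$ puts it in $V_{(2l_n - 2n)} = V_{(0)}$ — useless. So the honest content is that once $g_v(n)$ runs long enough, it must wrap into a cycle of special edges; by Lemma 8 (a special path whose range is a new vertex has length $\leq |V|$) this cannot happen with all distinct vertices, so $g_v(n)$ enters a special \emph{cycle} $C$. Under the standing specialization hypothesis such a special cycle must lie entirely inside $\bigcup_{i=1}^{k} W_i$ (otherwise there would be infinitely many special paths avoiding the frame, contradicting Lemma 13). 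Thus for $n$ large, $g_v(n) = g_v(n_0)\,c^{m}c'$ where $c$ traverses a special cycle inside some $W_i$; the trailing special part is the whole thing, but crucially $r(g_v(n)) \in W_i$, and then $g_v(n)g_v(n)^* = g_v(n_0)\,c^m c'\,(c^m c')^*\,g_v(n_0)^*$ is an element whose degree is $0$ and whose "new" special contribution is governed only by the finite initial segment $g_v(n_0)$ — more precisely I would show $g_v(n)g_v(n)^* = g_v(n_0)\,w\,g_v(n_0)^*$ for a vertex $w\in W_i$ using relation (4) summed along the cycle, which collapses $c^m c' (c^m c')^*$. Then $sd(g_v(n)g_v(n)^*) \leq 2\,sd(g_v(n_0)) =: s_0$ is bounded, while $l$ of the product is $\geq 2l_n \to \infty$, so $g_v(n)g_v(n)^* \in V_{n', s_0, 0}$ with $n' \to \infty$, hence lies in $V_k$ for all large $n$.

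\textbf{Main obstacle.} The delicate step is justifying the collapse $c^m c'(c^m c')^* = r(g_v(n))$ (a single vertex) — or at least that the special-depth of $g_v(n)g_v(n)^*$ as an \emph{algebra element}, once rewritten via the Leavitt relation (4) and the defining relation $e^*f = \delta_{e,f} r(e)$, stays bounded independently of how many times the special cycle is traversed. This requires carefully tracking that along a special cycle $C$ inside $W_i$, each vertex $s(e_j)$ of $C$ emits its special edge $e_j = \gamma(s(e_j))$ back into $C$, so repeatedly applying relation (4) is not directly available (relation (4) sums over \emph{all} edges out of a vertex, not just the special one). Instead I expect to argue directly with the $sd$ invariant on basic elements: after one Gr\"obner--Shirshov reduction step (Lemma 5 style), $g_v(n)g_v(n)^*$ decomposes into basic elements each of which, because the matching tail edges $e_n = f_m$ are special and get cancelled by $e^*e = r(e)$, has strictly smaller length while keeping $sd$ controlled — iterating this cancellation peels off the entire special cycle part, landing on $g_v(n_0)\,(\text{vertex or short tail})\,g_v(n_0)^*$. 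So the real work is a bookkeeping lemma: \emph{for a special path $p$, the basic-element expansion of $pp^*$ consists of elements $p_iq_i^*$ with $sd(p_i)+sd(q_i)$ bounded by $2|V|$}, which is exactly the phenomenon already exploited in Lemmas 8 and 10 for arrival paths — and indeed $r(g_v(n)) $ eventually stabilizes in a fixed $W_i$, so $g_v(n)$ is "almost" an arrival path into that $W_i$. I would therefore reduce the lemma to Lemma 10 by showing that for $n$ large, $g_v(n)g_v(n)^* = \sum_{p\in P} pp^*$ for a finite set $P$ of arrival paths into $W_i$ (built as in the proof of Lemma 14), whence Cauchyness is inherited from Lemma 10. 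That reduction is the crux and the rest is routine.
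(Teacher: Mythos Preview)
Your proposal misreads the statement. Following the paper's own usage (Lemma~6, Lemma~9), you take ``Cauchy set'' to mean that all but finitely many $g_v(n)g_v(n)^*$ lie in each $V_k$, i.e.\ that $g_v(n)g_v(n)^*\to 0$. But this is \emph{false}: the very next line of the paper defines $e_v=\lim_n g_v(n)g_v(n)^*$ and Lemma~19(1) shows $e_v\neq 0$. Concretely, using the Cuntz--Krieger relation one has
\[
g_v(n)g_v(n)^*=v-\sum_{j=0}^{n-1}\ \sum_{e\in\mathcal{E}(r(g_v(j)))} g_v(j)\,ee^*\,g_v(j)^*,
\]
and each summand on the right lies in $V_{2(j+1)}\subseteq V_2$. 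If $g_v(n)g_v(n)^*\in V_k$ for some $k\geq 2$ and some $n$, then $v\in V_2$; but $V_2\subseteq V_{(2)}$ (Lemma~4) and the vertex $v\in B(\gamma)$ has length $0$, so $v\notin V_{(2)}$. Hence $g_v(n)g_v(n)^*\notin V_k$ for \emph{every} $n$ once $k\geq 2$, and the literal ``Cauchy set'' statement cannot hold. This is exactly why your ``main obstacle'' (collapsing $c^m c'(c^m c')^*$ to a vertex, or bounding $sd$ after rewriting) is insurmountable: there is no rewriting that pushes $g_v(n)g_v(n)^*$ into $V_k$.

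What the paper actually proves---and what is needed for $e_v$ to exist---is that the \emph{sequence} $(g_v(n)g_v(n)^*)_n$ is Cauchy, i.e.\ the consecutive differences converge to $0$. The argument is a one-liner: splitting off the special edge $\gamma(r(g_v(n)))$ in the relation $r(g_v(n))=\sum_{s(e)=r(g_v(n))}ee^*$ gives
\[
g_v(n+1)g_v(n+1)^*-g_v(n)g_v(n)^*=-\sum_{e\in\mathcal{E}(r(g_v(n)))} g_v(n)\,ee^*\,g_v(n)^*.
\]
Each term on the right is $pp^*$ with $p=g_v(n)e$ ending in the \emph{non-special} edge $e$, so $sd(p)=0$; with $l(p)+l(p)=2(n+1)$ and $\deg=0$ this lies in $V_{2(n+1),0,0}\subseteq V_{2(n+1)}$. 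No special-cycle analysis, no appeal to Lemmas~8, 10, 13 or 14 is needed. The paper's phrasing ``Cauchy set'' here is loose; read it as ``the sequence is Cauchy'' and the proof is immediate.
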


\begin{proof}
For a vertex $v \in V$ let $\mathcal{E}(v)$ denote the set of all non special edges $e$ with $s(e)=v.$ Then $$g_v(n+1)g_v(n+1)^*=g_v(n)g_v(n)^*-\sum\limits_{\scriptstyle e \in \mathcal{E}(r(g_v(n)))}g_v(n)e e^*g_v(n)^*,$$
which implies $g_v(n+1)g_v(n+1)^*-g_v(n)g_v(n)^* \in V_{2(n+1)}.$
\end{proof}

Let $e_v=\lim \limits_{n \to \infty}g_v(n)g_v(n)^*$.

\begin{lemma}
(1) $e_v$ is a non zero idempotent , (2) if $v\neq w$ then $e_v, e_w$ are orthogonal , (3) if $e$ is a non special edge from $E(v,V)$ then $e_v e=0;$ if $e=\gamma(v)$ then $e_v e=e e_w,$ where $w=r(e)$.

\end{lemma}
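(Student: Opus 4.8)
The plan is to reduce all three assertions to statements about the partial products $x_n:=g_v(n)g_v(n)^*$ (together with the analogues $y_m:=g_w(m)g_w(m)^*$) and then pass to the limit, using that multiplication in $\widehat L(\Gamma)$ is separately continuous, which follows from Lemmas 2 and 3. The computational backbone will be the identity
\[
x_n x_m = x_{\max\{n,m\}}\qquad(n,m\ge 0),
\]
proved by observing that for $n\le m$ the path $g_v(m)$ is a continuation of $g_v(n)$, say $g_v(m)=g_v(n)t$, so that $g_v(n)^*g_v(m)=r(g_v(n))\,t=t$ by the defining relations, whence $x_nx_m=g_v(n)t\,g_v(m)^*=g_v(m)g_v(m)^*$. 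In particular each $x_n$ is an idempotent and the $x_n$ form an increasing chain of idempotents; consequently $x_n e_v=\lim_m x_nx_m=e_v$ for each fixed $n$, and letting $n\to\infty$ gives $e_v^2=e_v$, which is half of (1).

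The delicate half of (1) is $e_v\ne0$. Here I would invoke the recursion established in the proof of Lemma 18, $x_{n+1}=x_n-\sum_{e\in\mathcal E(r(g_v(n)))}g_v(n)ee^*g_v(n)^*$, to expand $x_n$ by induction as $v$ minus a sum of basic elements $g_v(k)e\,(g_v(k)e)^*\in B(\gamma)$, where $0\le k<n$ and $e$ is a non-special edge out of $r(g_v(k))$. These basic elements are pairwise distinct and all distinct from $v$: two of them coincide only if the special trajectory has reached a sink, at which stage no non-special edge is available. Passing to the limit exhibits $e_v$ as a convergent series over $B(\gamma)$ in which the coefficient of $v$ equals $1$, so $e_v\ne0$ by the uniqueness statement of Lemma 6.

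For (2), if $v\ne w$ then $g_v(n)$ and $g_w(m)$ start at distinct vertices, so neither is a continuation of the other and $g_v(n)^*g_w(m)=0$ for all $n,m$ by Remark 1; hence $x_ny_m=0$, and taking limits first in $m$ and then in $n$ yields $e_ve_w=0$, and symmetrically $e_we_v=0$. For (3), fix $e$ with $s(e)=v$; then $v$ is not a sink and for $n\ge1$ one has $g_v(n)=\gamma(v)\,g_{r(\gamma(v))}(n-1)$, so the first edge of $g_v(n)$ is $\gamma(v)$. If $e$ is non-special then $\gamma(v)^*e=0$, forcing $x_ne=0$ for all $n\ge1$, so $e_ve=\lim_n x_ne=0$. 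If $e=\gamma(v)$ and $w=r(e)$, then $g_v(n)=e\,g_w(n-1)$ for $n\ge1$, hence $x_ne=e\,g_w(n-1)g_w(n-1)^*e^*e=e\,g_w(n-1)g_w(n-1)^*$ using $e^*e=r(e)$ and $g_w(n-1)^*r(e)=g_w(n-1)^*$; letting $n\to\infty$ gives $e_ve=e\,e_w$.

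The genuinely routine ingredients here are the path calculus on products $pq^*$ and the commutation of multiplication with limits, all licensed by Lemmas 2 and 3. I expect the only real obstacle to be the non-vanishing in part (1): since $x_n$ is not itself a basic element for $n\ge1$, one cannot simply read off a coordinate, and must instead keep control of the entire $B(\gamma)$-expansion of $x_n$ in order to apply the uniqueness of convergent representations from Lemma 6.
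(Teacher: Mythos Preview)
Your argument is correct and follows essentially the same route as the paper: the expansion $e_v=v-\sum_{k\ge0}\sum_{e\in\mathcal E(r(g_v(k)))}g_v(k)e\,e^*g_v(k)^*$ combined with Lemma~6 for non-vanishing, the observation $e_v\in v\widehat L(\Gamma)v$ (your Remark~1 version) for orthogonality, and the computations $g_v(n)^*e=0$ for non-special $e$ resp.\ $g_v(n)^*e=g_w(n-1)^*$ for $e=\gamma(v)$ in part~(3). One harmless slip: since $x_nx_m=x_{\max\{n,m\}}$, the chain $x_n$ is \emph{decreasing}, not increasing, in the idempotent order---but your computation $x_ne_v=\lim_m x_nx_m=e_v$ and hence $e_v^2=e_v$ is unaffected.
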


\begin{proof}
For an arbitrary $n \geq 1$ the element $g_v(n)g_v(n)^*$ is an idempotent. Hence the limit $e_v$ is an idempotent as well. Let us show that $e_v\neq0.$ Indeed, denote $v_n=r(g_v(n)),v_0=v.$ Then $$g_v(n)g_v(n)^*=v-\sum\limits_{e \in\mathcal{E}(v_i)}g_v(i)ee^*g_v(i)^*.$$ Hence, $e_v=v-\sum\limits_{k\geq 1}a_k,$ where $a_k=g_v(k)(\sum\limits_{e \in \mathcal{E}(v_k)}ee^*)g_v(k)^*\to 0$ as $k\to \infty.$ Since $B(\gamma)$ is a topological basis of $\widehat{L}(\Gamma)$ by Lemma 6, it implies that $e_v\neq 0.$\\
For distinct vertices $v,w \in V$ we have $e_v e_w=0$ since $e_v \in v \widehat{L}(\Gamma)v.$\\
If $e \in E(v,V)$  and $e$ is not special then $e_v e=0$ since $g_v(n)^*e=0$ for $n \geq 1.$ If $e=\gamma(v)$ then $g_v(n)^*e=g_w(n-1)^*,$ where $w=r(e).$ Hence, $g_v(n) g_v(n)^*e=eg_w(n-1)g_w(n-1)^*.$ This finishes the proof of the Lemma.
\end{proof}

Consider the graph $(V,\gamma(V))$ with the set of vertices $V$ and the set of edges $\gamma(V).$ Let $\tilde{\gamma}(V)$ be the set $\gamma(V)$ with all edges having lost their directions, $(V,\tilde{\gamma}(V))$ is the corresponding not directed graph.\\

We say that a vertex $w$ is a special descendant of a vertex $v$ if there exists a special path $p$ in $\Gamma$ such that $s(p)=v, r(p)=w.$

\begin{lemma}
Vertices $v,w \in V$ are connected in $(V,\tilde{\gamma}(V))$ if and only if they have a common special descendant in $\Gamma.$
\end{lemma}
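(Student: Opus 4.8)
The plan is to exploit the fact that in the graph $(V,\gamma(V))$ every vertex has out-degree at most one: a non-sink $v$ emits exactly the single special edge $\gamma(v)$, and a sink emits none. Hence, following special edges from any vertex $v$ produces a uniquely determined sequence $v=y_0,y_1,y_2,\ldots$ with $y_{i+1}=r(\gamma(y_i))$, which either terminates at a sink or is eventually periodic along a special cycle. The set of special descendants of $v$ is exactly the set of vertices occurring in this trajectory.

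For the ``if'' direction I would simply observe that a special path from $v$ to a common special descendant $u$, together with a special path from $w$ to $u$, become walks in the undirected graph $(V,\tilde\gamma(V))$ joining $v$ to $u$ and $w$ to $u$; hence $v$ and $w$ lie in the same connected component of $(V,\tilde\gamma(V))$.

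For the ``only if'' direction, let $C$ be the connected component of $(V,\tilde\gamma(V))$ containing $v$ and $w$, say with $n$ vertices and $m$ special edges. Each non-sink of $C$ contributes exactly one special edge, and the range of that edge again lies in $C$ (it is joined to the source by the edge); conversely every special edge incident to a vertex of $C$ has its source in $C$. Therefore $m=n-t$, where $t$ is the number of sinks of $\Gamma$ that lie in $C$. Since $C$ is connected we have $m\geq n-1$, so $t\in\{0,1\}$. If $t=1$, then $C$ is a tree, and the special trajectory from any vertex can neither repeat a vertex (that would force a special cycle, impossible in a tree) nor stop before reaching a vertex with no outgoing special edge, i.e.\ a sink; so it reaches the unique sink of $C$, which is then a common special descendant of $v$ and $w$. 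If $t=0$, then $m=n$ and $C$ is unicyclic; the special trajectory from any vertex never stops, so it runs into the unique cycle $D$ of $C$ and thereafter traverses all of $D$, whence any fixed vertex of $D$ is a common special descendant of $v$ and $w$.

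I expect the only step requiring genuine care to be the edge count establishing $t\leq 1$ and the resulting tree/unicyclic dichotomy: one must verify that every special edge touching $C$ has its source inside $C$, so that $m$ equals the number of non-sinks of $C$, and then apply the elementary bound $m\geq n-1$ for a connected graph. Once $C$ is recognized as a tree or as a unicyclic graph, exhibiting the common special descendant (the sink of $C$, respectively any vertex of its cycle) is routine from the trajectory description above.
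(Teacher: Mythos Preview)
Your proof is correct and takes a genuinely different route from the paper's. For the nontrivial (``only if'') direction, the paper inducts on the length of a geodesic $v=v_1,\ldots,v_{n+1}=w$ in $(V,\tilde\gamma(V))$: if the first edge is oriented $v_1\to v_2$, then by induction $v_2$ and $w$ share a special descendant, which is then automatically one of $v_1$ as well; if instead $v_1\leftarrow v_2$, then uniqueness of the special edge out of each $v_i$ (together with distinctness of the $v_i$ along a geodesic) forces $v_i\leftarrow v_{i+1}$ for every $i$, so that $v$ is itself a special descendant of $w$. Your argument replaces this local induction by a global count on the component $C$: out-degree $\leq 1$ in $(V,\gamma(V))$ gives $|E(C)|=|V(C)|-t$ with $t$ the number of sinks in $C$, and connectedness forces $t\leq 1$; the resulting tree/unicyclic dichotomy then exhibits the common descendant (the unique sink, respectively any vertex of the unique cycle). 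The paper's approach is a bit shorter; yours yields additional structural information about the components of $(V,\tilde\gamma(V))$ that is not needed here but is pleasant. One small remark: your edge count should be read in the multigraph sense (loops and parallel undirected edges can occur, e.g.\ when $r(\gamma(u))=u'$ and $r(\gamma(u'))=u$), but the inequality $m\geq n-1$ for connected multigraphs still holds, so the dichotomy is unaffected.
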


\begin{proof}
Let $p=\tilde{e}_1\cdots \tilde{e}_n$ be a geodesic path in $(V,\tilde{\gamma}(V))$ connecting $v$ and $w;\,  e_1,\cdots ,e_n \in \gamma(V).$ The (undirected) edge $\tilde{e}_1$ connects $v_1=v$ with a vertex $v_2,$ the edge $\tilde{e}_2$ connects $v_2$ with $v_3,$ and so on. All the vertices $v_1=v, v_2,\cdots,v_{n+1}=w$ are distinct. If $v_1\rightarrow v_2,$ then $v_2$ and $w$ have a common special descendant in $\Gamma$ as the distance between them in $(V,\tilde{\gamma}(V))$ is $n-1.$ Hence $v_1,w$ have a common special descendant.\\
Let $v_1\leftarrow v_2.$ Since there is a unique special edge in $\Gamma$ with the source $v_2$ it follows that $v_2\leftarrow v_3$ and similarly $v_3\leftarrow v_3\leftarrow\cdots v_n\leftarrow w.$
Now $v$ is a special descendant  of $w$ which finishes the proof of the Lemma.
\end{proof}

\begin{lemma}
(1) If vertices $v,w \in V$ are connected in $(V,\tilde{\gamma}(V))$ then $e_v,e_w$ generate the same closed ideal in $\widehat{L}(\Gamma)$; \\
(2). If $v,w \in V$ are not connected in $(V,\tilde{\gamma}(V))$ then $e_v\widehat{L}(\Gamma) e_w=(0).$
\end{lemma}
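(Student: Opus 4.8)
The plan is to prove both parts using Lemma 20, which translates connectedness in $(V,\tilde{\gamma}(V))$ into the existence of a common special descendant, together with the structural information about the idempotents $e_v$ from Lemma 19.

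For part (1), suppose $v$ and $w$ are connected. First I would reduce to the case where $v$ and $w$ are joined by a single edge of $(V,\tilde{\gamma}(V))$, since the general case follows by transitivity (the closed ideal generated by $e_v$ equalling that generated by $e_w$ is an equivalence relation). So assume, without loss of generality, that $e=\gamma(v)$ is a special edge with $s(e)=v$, $r(e)=w$. By Lemma 19(3) we have $e_v e=e e_w$, hence $e^* e_v e = e^* e e_w = r(e) e_w = w e_w = e_w$ (using that $e_w \in w\widehat{L}(\Gamma)w$). This shows $e_w$ lies in the closed ideal generated by $e_v$. For the reverse inclusion, I would use $e_v e = e e_w$ again to get $e_v e e^* = e e_w e^*$; and since $g_v(n)g_v(n)^*$ is a finite sum of the form $v - \sum g_v(k)(\sum_{f\in\mathcal E(v_k)} ff^*)g_v(k)^*$, with the first special step being $g_v(1)g_v(1)^* = ee^* - (\text{lower terms involving non-special edges})$, one can recover $v$ (and hence $e_v = v\cdot e_v$, since $e_v \in v\widehat L(\Gamma) v$) from $e_v$ together with products on the left and right by elements of $\widehat L(\Gamma)$; more directly, $e_v = v e_v$ and $v = ee^* + \sum_{f\in\mathcal E(v)} ff^*$, so $e_v = ee^* e_v$, and then $e^* e_v e$ relates back to $e_w$-side data. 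Actually the cleanest route: from $e_v e = e e_w$ we get $e_v (ee^*) = (ee^*) e_v$ so $ee^*$ commutes with $e_v$; combined with $e_v = v e_v$ and the fact that the non-special summands of $v$ kill $e_v$ (Lemma 19(3): $e_v f = 0$ for non-special $f\in E(v,V)$, and by applying $*$ also $f^* e_v = 0$... one must be careful here since $e_v$ is not symmetric, but $e_v f = 0$ gives $e_v ff^* = 0$), we obtain $e_v = e_v(ee^*) = (ee^*)e_v$, so $e_v = e(e^* e_v e)e^* = e e_w e^*$, placing $e_v$ in the closed ideal generated by $e_w$. This establishes part (1).

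For part (2), suppose $v$ and $w$ are not connected in $(V,\tilde{\gamma}(V))$, so by Lemma 20 they have no common special descendant. The goal is $e_v \widehat{L}(\Gamma) e_w = (0)$. Since $e_v = v e_v$ and $e_w = e_w w$, and $v\widehat L(\Gamma) w$ is spanned by products $pq^*$ with $s(p)=v$, $s(q)=w$, $r(p)=r(q)$, it suffices (by the topological basis Lemma 6 and continuity) to show $e_v p q^* e_w = 0$ for each such basic element. Now $e_v p q^* e_w = e_v p q^* e_w$; using Lemma 19(3) repeatedly, multiplying $e_v$ on the right by an edge $f$ with $s(f)=v$ gives $0$ unless $f = \gamma(v)$, in which case $e_v f = f e_{r(f)}$; iterating, $e_v p$ is either $0$ (if $p$ is not special) or equals $p\, e_{r(p)}$ (if $p$ is special, starting at $v$). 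Symmetrically, applying $*$ to Lemma 19(3): from $e_v g = g e_w$ we get $g^* e_v^* \cdots$ — here I need the analogous statement for $e_w$ on the left of $q^*$, namely $q^* e_w$ is $0$ unless $q$ is special with $s(q)=w$, in which case $q^* e_w = $ (something) $\cdot q^*$... this requires knowing how $e_w$ interacts with $q^*$, which by Lemma 19(3) applied to each edge of $q$ and transposed gives $q^* e_w = e_{r(q)} q^*$ when $q$ is special. Hence $e_v p q^* e_w = 0$ unless both $p$ and $q$ are special with $s(p)=v$, $s(q)=w$, $r(p)=r(q)=:u$, and then it equals $p e_u q^* = p e_u e_u q^*$; but $u$ would be a common special descendant of $v$ and $w$, contradicting our assumption. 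Therefore every such product vanishes, and by density $e_v\widehat L(\Gamma) e_w = (0)$.

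The main obstacle I anticipate is the lack of $*$-symmetry of the idempotents $e_v$ (note $g_v(n)g_v(n)^*$ is symmetric, but the various algebraic manipulations can leave that class). Lemma 19(3) is stated only for right multiplication $e_v e$, so for part (2) I must carefully derive the "transposed" behavior $q^* e_w = e_{r(q)} q^*$ for special $q$ — this should follow by applying the involution $*$ to the identity $e_w q = q\, e_{r(q)}$ (which itself comes from iterating Lemma 19(3)), \emph{provided} $e_w$ is $*$-invariant. I would verify $e_w^* = e_w$ directly: $e_w = \lim g_w(n) g_w(n)^*$ and each $g_w(n)g_w(n)^*$ is visibly self-adjoint, and $*$ is continuous (it is an isometry for the topology, since $V_k^* = V_k$ because $sd$ and $|\deg|$ and $l(p)+l(q)$ are all $*$-symmetric), so $e_w^* = e_w$. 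Once this symmetry is in hand, both parts go through cleanly.
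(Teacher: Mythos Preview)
Your proposal is correct and follows essentially the same route as the paper: for (1) reduce to adjacent vertices and use Lemma~19(3) to write $e_w = e^* e_v e$ (and, symmetrically, $e_v = e\, e_w\, e^*$); for (2) use density of $L(\Gamma)$, apply Lemma~19(3) to kill the non-special paths, and invoke Lemma~20 to rule out the case where both $p$ and $q$ are special. Your explicit verification that $e_w^* = e_w$ (via continuity of $*$ and self-adjointness of $g_w(n)g_w(n)^*$), which you need to transpose Lemma~19(3) into $q^* e_w = e_{r(q)} q^*$, is a detail the paper uses without comment.
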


\begin{proof}
Let vertices $v,w \in V$ be connected in $(V,\tilde{\gamma}(V))$. Without loss of generality we can assume that there is a special edge $e \in \gamma(V)$ such that $v\rightarrow w ~or~ v\leftarrow w.$ In the first case $e_w=e^*e_v e$ by Lemma 19(3). In the second case $e_w=e e_v e^*.$ In both cases $e_w$ lies in the ideal generated $e_v$, which proves the claim (1).\\
Now let $v,w$ lie in different connected components of $((V,\tilde{\gamma}(V))$. Since $L(\Gamma)$ is dense in $\widehat{L}(\Gamma)$ it is sufficient to prove that $e_v L(\Gamma)e_w=(0)$. Let $p,q \in Path(\Gamma), r(p)=r(q), s(p)=v, s(q)=w.$ We need to show that $e_v pq^*e_w=0.$ If $p$ is not a special path then $e_v p=0$ by Lemma 19 (3) and similarly $q^*e_w=0$ if the path $q$ is not special. If both paths $p,q$ are special then Lemma 19(3) implies $e_v p= p e_{r(p)}, q^*e_w=e_{r(q)} q^*, r(p)\neq r(q)$. Hence $v,w$ do not have a common special descendant. Hence $e_v pq^*e_w=p e_{r(p)} e_{r(q)}q^*=0,$ which finishes the proof.
\end{proof}

\begin{lemma}
Let $I$ be a non zero closed graded ideal of $\widehat{L}(\Gamma).$ Then $I_0=I \cap\widehat{L}(\Gamma)_0\neq (0) $.

\end{lemma}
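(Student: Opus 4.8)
The plan is to show directly that a nonzero homogeneous piece of $I$ produces a nonzero piece of degree $0$. Since $I$ is a nonzero graded ideal, $I=\sum_i(I\cap\widehat{L}(\Gamma)_i)$, so there is a $d\in\mathbb{Z}$ with $I_d=I\cap\widehat{L}(\Gamma)_d\neq(0)$; if $d=0$ there is nothing to prove, so I assume $d\neq 0$, and interchanging the roles of left and right multiplication I may assume $d>0$.

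First I would pick $0\neq a\in I_d$. By Lemma 6, $a$ has a unique expansion $a=\sum_{b\in B(\gamma)}\alpha_b b$ as a convergent series over the topological basis, and since $a$ is homogeneous of degree $d$ every $b=pq^*$ with $\alpha_b\neq 0$ satisfies $l(p)-l(q)=d$; in particular $l(p)\geq d$ for all such $p$. Fix one such term $p_0q_0^*$ and let $u$ be the path consisting of the first $d$ edges of $p_0$ (a subpath of $p_0$, of length $d$). Then $u^*a\in I$ because $I$ is a two-sided ideal, and $\deg(u^*a)=\deg(a)-l(u)=0$, so everything reduces to checking that $u^*a\neq 0$.

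To compute $u^*a$ I would use that left multiplication by $u^*\in L(\Gamma)$ is continuous (Lemma 3), so $u^*a=\sum_b\alpha_b\,u^*b$. For a term $b=pq^*$ occurring in $a$ we have $l(p)\geq d=l(u)$, so by Remark 1 either $u^*p=0$, or $p$ is a continuation of $u$, say $p=up''$, in which case the defining relations of $L(\Gamma)$ give $u^*p=p''$, hence $u^*b=p''q^*$, again an element of $B(\gamma)$ (one has $r(p'')=r(q)$, and $p''q^*$ inherits the non-special-last-edge condition from $pq^*$). The assignment $(p,q)\mapsto(p'',q)$ is injective on the set of terms with $u^*b\neq 0$, so the nonzero elements $u^*b$ are pairwise distinct basis elements; therefore $u^*a=\sum_{b:\,u^*b\neq 0}\alpha_b\,(u^*b)$ is already the expansion of $u^*a$ in the topological basis. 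Since $p_0=up_0''$ starts with $u$, the term $\alpha_{p_0q_0^*}\,p_0''q_0^*$ survives with nonzero coefficient, so $u^*a\neq 0$ by the uniqueness part of Lemma 6, and $u^*a$ is the required nonzero element of $I_0$. The case $d<0$ is symmetric: one multiplies $a$ on the right by the path formed by the first $-d$ edges of a chosen $q_0$, using that $q^*u=(q'')^*$ whenever $q=uq''$.

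The step I expect to be the main obstacle is the "no cancellation" claim: that applying $u^*$ term by term yields pairwise distinct basis elements, so that the contribution of $p_0q_0^*$ cannot be annihilated. This rests on two points worth spelling out carefully — that every $p$ occurring in $a$ has length at least $d$, so that in Remark 1 we are always in the branch where $p$ continues $u$ and never the reverse, and that stripping off the fixed prefix $u$ is injective on paths (with the length-$0$ case $p=u$ giving the vertex $r(u)$). The remaining ingredients — $\deg(u^*a)=0$, $u^*a\in I$, and the term-by-term evaluation justified by continuity of multiplication — are routine.
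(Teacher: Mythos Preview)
Your argument is correct and follows the same overall strategy as the paper: take a nonzero homogeneous $a\in I_d$ with $d>0$, and produce a nonzero element of $I_0$ by left–multiplying by $u^*$ for a path $u$ of length $d$. The difference lies only in how non-vanishing of $u^*a$ is established.

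The paper does not single out a particular $u$. After reducing to $a=vaw$ it decomposes the vertex $v$ via the Cuntz--Krieger relation as $v=\sum_i p_ip_i^*$ with each $p_i$ either of length $d$ or of length $<d$ ending in a sink, checks that the sink-ending summands annihilate $a$, and concludes $a=\sum_{l(p_i)=d}p_i(p_i^*a)\in\widehat{L}(\Gamma)_d\,I_0$; hence $I_0\neq(0)$ because $a\neq 0$. This avoids any analysis of the topological basis.

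Your route instead fixes $u$ as the length-$d$ prefix of a chosen $p_0$ from the $B(\gamma)$-expansion and shows $u^*a\neq 0$ by a no-cancellation argument: stripping the fixed prefix $u$ sends distinct basis elements $pq^*$ with $p=up''$ to distinct basis elements $p''q^*$, and the surviving term $p_0''q_0^*$ guarantees a nonzero coefficient. The verifications you flag (that $l(p)\geq d$ forces $p$ to continue $u$ rather than the reverse, that $p''q^*\in B(\gamma)$ because its last edges agree with those of $pq^*$ or both collapse to a vertex when $l(p)=d$, and that the resulting sum converges by continuity from Lemma~3) are all sound. Your argument is a bit more explicit, while the paper's is shorter and basis-free; either is adequate here.
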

\begin{proof}
Choose a nonzero homogenous element $a \in I.$  Without loss of generality we can assume that there exist vertices $v,w \in V$ such that $a=v a w.$ If $deg(a)=0$ then we are done. Suppose that $\deg(a)=d\geq 1.$\\
\bigskip
The vertex $v$ can be represented as $v=\sum\limits_{i} p_i p_i^*,$ where $p_i \in Path(\Gamma)$ and for an arbitrary $i$ either $l(p_i)=d$ or $l(p_i)<d$ and $r(p_i)$ is a sink.
\bigskip
Let $a=\sum\alpha_{p,q} pq^*; \alpha_{p,q} \in F; p,q \in Path (\Gamma); r(p)=r(q); deg(pq^*)=d$ for every $p,q$. From $\deg(pq^*)=d\geq 1$ it follows that $l(p)=d+l(q)\geq d$ for each summed $d.$ \newline Suppose that $l(p_i)<d, r(p_i)$ is a sink and nevertheless $p_ip_i^* pq^*\neq 0.$ The path $p$ can not be a continuation of the path $p_i$ since $l(p_i)<l(p)$ and $r(p_i)$ is a sink. The path $p_i$ can not be continuation of path $p$ since $l(p)\geq d > l(p_i)$, a contradiction.

Hence , for all  $p_i p_i^*$ such that $l(p_i)<d,$ we have $p_i p_i^*a=0.$ This implies a $a \in \widehat{L}(\Gamma_d)\widehat{L}(\Gamma_{-d})a \subseteq \widehat{L}(\Gamma_d)I_0.$ The case $\deg(a)\leq -1$ is treated similarly.
\end{proof}

\begin{lemma}
 Let $W$ be a nonempty hereditary subset of $V$ and let $J$ be a nonzero closed graded of $\hat{L}(\Gamma)$ such that $J \subseteq I(W).$
 Then there exists a vertex $w\in W$ such that $e_w\in J.$
\end{lemma}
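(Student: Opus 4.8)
The plan is to use Lemma 22 to descend from an arbitrary nonzero element of $J$ to a nonzero element of the degree-zero part $J_0$, then analyze how $J_0$ interacts with the idempotents $e_w$, $w\in W$. First I would invoke Lemma 22 to get a nonzero element $a\in J_0=J\cap\widehat{L}(\Gamma)_0$; multiplying on both sides by suitable vertices we may assume $a=vav$ for a single vertex $v$, and since $J\subseteq I(W)$ the element $a$ is a converging sum $\sum\alpha_{p,q}\,pq^*$ with $r(p)=r(q)\in W$ and $l(p)=l(q)$. Because all such $pq^*$ lie in $v\widehat{L}(\Gamma)v$ with range in $W$, one can steer $a$ toward an idempotent $e_w$: the natural candidate is to multiply $a$ by the central-type idempotents $g_v(n)g_v(n)^*$ or by $e_v$ on the left and right, and to track what survives.

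The key computation I expect is this. Using Lemma 19(3), for a special path $p$ starting at $v$ we have $e_v p = p\,e_{r(p)}$, while $e_v p = 0$ whenever $p$ is not special. Hence $e_v a e_v = \sum \alpha_{p,q}\, p\,e_{r(p)}e_{r(q)}\,q^*$, where the sum now runs only over pairs of \emph{special} paths $p,q$ with the same range; by Lemma 19(2) only the diagonal terms $r(p)=r(q)$ contribute. If $e_v a e_v\neq 0$, pick a term $pq^*$ occurring in it with $r(p)=r(q)=w\in W$; then multiplying by $p^*(\cdot)q$ isolates a nonzero element of $e_w\widehat{L}(\Gamma)e_w$ lying in $J$, and since $e_w$ is (up to the ideal structure from Lemma 21) essentially the identity of the closed ideal generated by $e_w$, a further normalization produces $e_w\in J$. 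The remaining case is that $e_v a e_v = 0$ for the chosen $v$; then I would argue that $a$ can be replaced by a nonzero element supported on non-special paths, and use the relation $v=\sum_{s(e)=v}ee^*$ together with the definition of $g_v(n)$ to push the support of $a$ "deeper" — i.e.\ replace $a$ by $e^*ae$ or $g_v(n)^*a g_v(n)$ for appropriate edges — after finitely many steps (bounded, using Lemma 8 / Lemma 13 and the finiteness of special paths off the frame) reaching a vertex $v'$ for which the special-path contribution is nonzero.

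The main obstacle I anticipate is making the last descent argument rigorous: showing that one cannot keep "escaping" into ever-deeper non-special tails forever, so that the process terminates at a vertex where $e_{v'}ae_{v'}\neq 0$. This should follow because, by Lemma 13, special paths whose vertices avoid $\bigcup_i W_i$ are finite in number, and since $r(p)\in W\subseteq\bigcup_i W_i$ for every term of $a$, every path appearing in $a$ eventually enters $\bigcup_i W_i$; combined with the uniform bound on $sd$ from Lemma 8, this forces the supports to stabilize and guarantees that the truncations $g_v(n)^*a\,g_v(n)$ converge to a nonzero degree-zero element in which a special term survives. Once termination is secured, the extraction of $e_w\in J$ is the routine computation sketched above, using Lemma 19 and Lemma 21. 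It may be cleaner to organize the whole argument as: show $J$ contains a nonzero element of $\bigoplus_{w\in W} e_w\widehat{L}(\Gamma)e_w$, then use that this algebra is (by Lemma 20, Lemma 21) a "completed matrix-type" algebra over the connected components of $(V,\tilde\gamma(V))$ meeting $W$, in which any nonzero ideal contains a diagonal idempotent $e_w$.
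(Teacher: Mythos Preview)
Your overall strategy matches the paper's --- reduce to $J_0$ via Lemma~22 and then manufacture some $e_w$ by conjugating with special paths --- but the execution has a real gap in two places.

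First, the computation $e_v a e_v$ does not do what you expect. Once $a=vav\in J_0$ is written in the topological basis $B(\gamma)$ (Lemma~6), every basic summand $pq^*$ with $l(p)=l(q)\ge 1$ has last edges that are either distinct or equal but non-special; in either case $e_v(pq^*)e_v=0$ by Lemma~19(3), since if $e_vp\neq 0$ and $q^*e_v\neq 0$ then both $p$ and $q$ are entirely special, forcing $p=q=g_v(l(p))$ and $pq^*\notin B(\gamma)$. Thus $e_v a e_v$ is just $\alpha_v\,e_v$, where $\alpha_v$ is the coefficient of the vertex $v$ in $a$. There is nothing to ``pick a term'' from. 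Moreover, if $v\notin W$ then $\alpha_v=0$ automatically (every term of $a\in I(W)$ has range in $W$), so $e_v a e_v=0$ and your first case never arises unless you have already arranged $v\in W$ with $\alpha_v\neq 0$.

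Second, arranging this is exactly the step your proposal leaves vague. The paper does it concretely: choose a pair $(p_0,q_0)$ of \emph{minimal length} occurring in a reduced representation of $a$ (Lemma~7), and pass to $p_0^*aq_0$; this lands at the vertex $r(p_0)\in W$ with nonzero leading coefficient. A second such reduction, performed after rewriting in $B(\gamma)$, produces $b=w+\sum\mu_{p,q}pq^*$ with $w\in W$ and all $pq^*\in B(\gamma)$. This replaces your hoped-for ``descent'' that would terminate by Lemmas~8/13; those lemmas are not the right tool here, since what is needed is not a bound on special paths off the frame but the presence of a vertex term.

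From $b$ the paper does not multiply by $e_w$ directly. Instead it conjugates by $g_w(n)^*(\cdot)g_w(n)$ for a single sufficiently large $n$: all but finitely many terms already lie in $V_2$ and hence have $p,q$ both containing a non-special edge, while the finitely many remaining basic terms are killed for large $n$ because $g_w(n)g_w(n)^*\notin B(\gamma)$. This yields $c=u+\sum\nu_{p,q}pq^*$ with every $p,q$ containing a non-special edge; then $g_u(m)g_u(m)^*\,c\,g_u(m)g_u(m)^*\to e_u$ as $m\to\infty$, and closedness of $J$ gives $e_u\in J$. Your alternative organization via a ``completed matrix-type'' structure of $\bigoplus_w e_w\widehat L(\Gamma)e_w$ is not available: Lemmas~20 and~21 do not establish that structure, and in any case producing a nonzero element there is the same problem you started with.
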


\begin{proof}
By Lemma 22 the space $J_0$ contains a nonzero element $ a=\sum\alpha_{p,q} pq^*, l(p)=l(q), r(p)=r(q) \in w.$ By Lemma 7 we can assume that the sum is reduced.
Denote $\mathcal{P}=\{(p,q)\in Path (\Gamma)\times Path(\Gamma)\mid \alpha_{p,q}\neq 0\}.$ Choose $(p_0,q_0)\in \mathcal{P}$ with minimal length $l(p_0).$ Let $r(p_0)=v \in W.$\\
Let $\mathcal{P}(p_0,q_0)=\{(p,q) \in \mathcal{P}\mid p\text{ and } q \text{ are proper continuations of paths } p_0,q_0 \text{ respectively } \},$ $\mathcal{P'}(p_0,q_0)=\{(p,q) \in Path(\Gamma) \times Path(\Gamma)\mid  (p_0 p, q_0 q)\in \mathcal{P} (p_0, q_0)\}.$ \newline Then $a'=p_0^*a q_0=\alpha_{p_0,q_0}v+\sum\limits_{\scriptstyle (p,q) \in \mathcal{P'}(p_0,q_0)}\alpha_{p,q}pq^*$ and $p_0 a' q_0^*=\alpha_{p_0,q_0}p_0 q_0^*+\sum\limits_{(p,q) \in \mathcal{P}(p_0,q_0)}\alpha_{p,q}pq^*\neq 0,$ since the sum is reduced. Hence, $a'\neq 0.$\\

\bigskip

Remark, that $a'=v a' v.$\\

\bigskip

Rewriting each summed $p q^*, (p,q) \in \mathcal{P'} (p_0,q_0),$ as a linear combination of basic elements from $B(\gamma)$ and using Lemmas 5, 6 we get $a'=\sum\beta_{p,q}pq^*,$ where $\beta_{p,q}\in F,\,\, l(p)=l(q), s(p)=s(q)=v, pq^* \in B(\gamma)$ for each summed and the sum is reduced. Remark that since the subset $W$ is hereditary it follows that $r(p)=r(q) \in W$ for each summed. As we did  before choose a summand $p_0'q_0'^*$ with minimal $l(p_0').$ If $p=p_0' p', q=q_0'q'$ and $pq^* \in B(\gamma)$ then $p'q'^* \in B(\gamma)$ as well. Now, $b=\frac{1}{\beta_{p_0' p'}}p_0'^* a' q_0'=w+\sum\mu_{p,q} pq^*$ is a nonzero element from $J_0, s(p)=s(q)=w, l(p)=l(q)\geq 1, pq^* \in B(\gamma)$ for each summand.

Since the sum $\sum \mu_{p,q}pq^{*}$ is convergent it follows that the set
$\{(p,q)\in Path (\gamma)\times Path (\gamma)\mid \mu_{p,q} \neq 0, pq^{*}\notin V_2\}$ is finite.\\

If $pq^{*} \in V_2$ then $2l(p)\geq 2(sd(p)+sd(q)+1),$ which implies that both paths $p,q$ are not special.\\

For an arbitrary basic element $t \in B(\gamma),$ of degree $0$ which is not a vertex, we have $g_w(n)^* t g_w(n)=0$ for a sufficiently large $n$. Indeed, let $t=pq^{*}, l(p)=l(q)\geq 1, n=l(p).$ If $g_w(n)^* t g_w(n)\neq 0$ then either $p=q=g_w(n)$ or $l(g_w(n))=r < n$ and $r(g_w(n))$ is a sink.\\
The first case is impossible  since $g_w(n) g_w(n)^* \notin B(\gamma).$ If $l(g_w(n))=r, 1\leq r <n,$ then $g_w(n)^*p=q^*g_w(n)$ since the path $g_w(n)$ ends with a sink and therefore can not be a beginning of paths $p,q.$ Finally, if $w$ is a sink then it can not be the source of paths $p,q.$ Hence, for a sufficiently large $n$ we have $g_w(n)^* b g_w(n)=r(g_w(n))+\sum \mu_{p,q} g_w(n)^* pq^* g_w(u).$ This expression is not equal to $0$ by Lemma 6. In each summand  $g_w(n)^* pq^* g_w(n)$ the special edges in $p,q$ won't cancel. Denote $u=r(g_w(n)).$ We have $0 \neq c=u+\sum \nu_{p,q} pq^* \in J_0;\,\, l(p)=l(q)\geq 1, sd(p)=sd(q)=n, r(p)=r(q),$ both $p$ and $q$ contain non special edges, $pq^* \in B(\gamma),$ for each summand.\\

Now, as we did above, consider $g_u(m)^* c g_u(m)=r(g_u(n))+\sum \nu_{p,q} g_u(m)^* pq^* g_u(m)$ and $g_u(m) g_u(m)^* c g_u(m) g_u(m)^*=g_u(m) g_u(m)^* + \sum \nu_{p,q} pq^*,$ where both $p,q$ in each summand are continuations of $g_u(m).$ If $r(g_u(m))$ is a sink then $g_u(m)^* c g_u(m)=r(g_u(m))=e_{r(g_u(m))}\in J.$ If for any $m \geq 1,\,\, r(g_u(m))$ is not a sink then the sequence $\sum \nu_{p,q} pq^*,$ where $p,q$ are continuations of $g_u(m),$ converges to $0$ as $m\rightarrow \infty.$ This implies $e_u=\lim \limits_{m \to \infty}g_u(m)g_u(m)^* \in J,$ and completes the proof of Lemma.

\end{proof}

\begin{corollary}
 The algebra $\widehat{L}(\Gamma)$ does not have non-zero closed graded nilpotent  ideals.
 \end{corollary}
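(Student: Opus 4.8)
\section*{Proof proposal for Corollary 3}

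The statement is a corollary, so the plan is to derive it formally from the machinery already assembled, chiefly Lemma 23 and Lemma 19, via a short contradiction argument. Suppose $J$ is a nonzero closed graded ideal of $\widehat{L}(\Gamma)$ which is nilpotent, say $J^{m}=(0)$ for some $m\geq 1$. The strategy is to produce inside $J$ a nonzero idempotent, which is visibly incompatible with nilpotency.

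To do this, apply Lemma 23 with $W=V$. First one checks that $V$ is a hereditary subset of $V$ (trivially, since every descendant of a vertex is a vertex) and that $I(V)=\widehat{L}(\Gamma)$: indeed $Arr(V)$ consists only of the length-zero paths, so $e(V)=\sum_{v\in V}v=1$ and hence $I(V)=e(V)\widehat{L}(\Gamma)=\widehat{L}(\Gamma)$ by Lemma 17; equivalently, Lemma 6 shows every element of $\widehat{L}(\Gamma)$ is a converging series $\sum\alpha_{pq}pq^{*}$ with $r(p)=r(q)\in V$. In particular $J\subseteq I(V)$, so the hypotheses of Lemma 23 are met and there is a vertex $w\in V$ with $e_{w}\in J$. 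By Lemma 19(1), $e_{w}$ is a nonzero idempotent, so $e_{w}=e_{w}^{m}\in J^{m}=(0)$, forcing $e_{w}=0$, a contradiction. Hence $\widehat{L}(\Gamma)$ has no nonzero closed graded nilpotent ideal.

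I expect essentially no obstacle here, since the real work has been front-loaded into Lemma 23 (extracting an idempotent $e_{w}$ with $w$ a vertex from the degree-zero part of a nonzero closed graded ideal) and Lemma 19 (nonvanishing of $e_{w}$). The only point meriting a word of care is the identification $I(V)=\widehat{L}(\Gamma)$, which is what makes Lemma 23 applicable to an arbitrary $J$ with no containment restriction. If one prefers to argue block by block in the decomposition $\widehat{L}(\Gamma)=I(W_{1})\oplus\cdots\oplus I(W_{k})$, the same reasoning applies verbatim after replacing $J$ by one of its nonzero projections $J\cap I(W_{i})=e(W_{i})J$, which is again closed, graded and nilpotent, and lies in $I(W_{i})$ with $W_{i}$ a nonempty hereditary subset.
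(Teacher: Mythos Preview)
Your proof is correct and is exactly the argument the paper intends: the corollary is stated without proof immediately after Lemma~23 because that lemma, applied with $W=V$ (so $I(V)=\widehat{L}(\Gamma)$), places a nonzero idempotent $e_w$ inside any nonzero closed graded ideal $J$, which is impossible if $J$ is nilpotent. Your verification that $I(V)=\widehat{L}(\Gamma)$ via $e(V)=1$ and Lemma~17 is the right way to justify the applicability of Lemma~23, and the alternative block-by-block remark is a harmless variant.
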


 \begin{lemma}
 Let $W$ be a minimal hereditary subset of $V$. Then the ideal $I(W)$ is generated (as an ideal) by all idempotents $e_w, w \in W.$
 \end{lemma}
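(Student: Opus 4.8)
The plan is to identify $I(W)$ with $J$, the closed ideal of $\widehat{L}(\Gamma)$ generated by the idempotents $\{e_w\mid w\in W\}$. One inclusion is immediate: if $w\in W$ then, $W$ being hereditary, $r(g_w(n))\in W$ for every $n$, so each $g_w(n)g_w(n)^{*}$ is a product $pq^{*}$ with $r(p)=r(q)\in W$ and hence lies in $I(W)$; as $I(W)$ is closed, $e_w=\lim_{n}g_w(n)g_w(n)^{*}\in I(W)$, and therefore $J\subseteq I(W)$. For the reverse inclusion it is enough to prove that every vertex $v\in W$ lies in $J$, for then $W\subseteq J$ and $I(W)$, being by definition the closed ideal generated by $W$, is contained in $J$.

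To show that $v\in J$ for $v\in W$, I would iterate an identity taken from the proof of Lemma 19. Writing $\mathcal{E}(x)$ for the set of non-special edges with source $x$, and using the identity $u=e_u+\sum_{k\geq 0}g_u(k)\big(\sum_{e\in\mathcal{E}(r(g_u(k)))}ee^{*}\big)g_u(k)^{*}$ from that proof, for any path $q$ with $u=r(q)$ one gets, on multiplying by $q$ on the left and by $q^{*}$ on the right (and using $quq^{*}=qq^{*}$),
$$qq^{*}=q\,e_u\,q^{*}+\sum_{k\geq 0}\ \sum_{e\in\mathcal{E}(r(g_u(k)))}(q\,g_u(k)\,e)(q\,g_u(k)\,e)^{*},$$
where each path $q\,g_u(k)\,e$ ends in the non-special edge $e$, so that $sd(q\,g_u(k)\,e)=0$, and is strictly longer than $q$. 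Starting from $vv^{*}=v$ and applying this identity repeatedly — expanding, at each round, every term $pp^{*}$ produced by the previous round — after $m$ rounds one reaches an equality $v=a_m+R_m$, where $a_m$ is a sum of terms $p\,e_{r(p)}\,p^{*}$ with $r(p)\in W$, each lying in $J$ because $e_{r(p)}\in J$ and $p\in L(\Gamma)$, and $R_m$ is a sum of terms $pp^{*}$ with $l(p)\geq m$, $sd(p)=0$ and $\deg(pp^{*})=0$, so that $pp^{*}\in V_{2m}$ and $R_m\in\overline{V_{2m}}$. Both sums converge because $\Gamma$, being finite, has only finitely many paths of any given length; hence $a_m\in J$. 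Letting $m\to\infty$, the remainders $R_m$ tend to $0$, so $a_m\to v$, and since $J$ is closed we conclude $v\in J$. This proves $I(W)\subseteq J$, and combined with the first inclusion, $I(W)=J$.

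The step I expect to be the main obstacle is the bookkeeping in the iterated expansion of $v$: one has to arrange the rounds so that after $m$ of them every surviving remainder term $pp^{*}$ has $l(p)\geq m$ and $sd(p)=0$ (which is exactly what sends $R_m$ into $V_{2m}$, hence to $0$), and one has to check that the accumulated terms $p\,e_{r(p)}\,p^{*}$ — of which there may be infinitely many at each round — really assemble into well-defined convergent elements of $J$; the finiteness of $\Gamma$, ensuring finitely many paths of each length, is the key point that makes both of these go through. Once this is settled, the passage to the limit is routine.
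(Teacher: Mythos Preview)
Your argument is correct and is essentially the same as the paper's: the paper packages the iterative expansion into a matrix of operators $A=(A_{w_i,w_j})$ with $A_{w,v}(a)=\sum_{k\ge 0}\sum_{e\in\mathcal{E}(r(g_w(k))),\,r(e)=v} g_w(k)\,e\,a\,e^{*}\,g_w(k)^{*}$, observes that $\overline{e}_w=(I-A)\overline{w}$, and inverts via the geometric series $\overline{w}=\sum_{i\ge 0}A^{i}\overline{e}_w$, which converges because $A^{i}\overline{w}$ lands in $(V_{2i})^{r}$. Your $R_m$ is precisely the component of $A^{m}\overline{w}$ and your $a_m$ is the partial sum $\sum_{i<m}A^{i}\overline{e}_w$, so the two arguments are the same computation written recursively versus in operator form; your explicit use of $J$ being \emph{closed} (needed since each round already produces an infinite sum) is a point the paper leaves implicit.
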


 \begin{proof}
 If $W$ consists of one sink $w$ then $e_w=w.$ Suppose therefore that the subset $W$ does not contain sinks. Let $J$ be the ideal of $\widehat{L}(\Gamma)$ generated by all idempotents $e_w, w \in W, J \subseteq I(W).$\\
\bigskip
For a vertex $w \in W$ we have $e_w=w-\sum \{g_w(k)e e^* g_w(k)^* \mid k \geq 0, e \in \mathcal{E}(r(g_w(k)))\}.$ For arbitrary vertices $w, v \in W,$ consider the operator $A_{w,v}:v \widehat{L}(r)v \rightarrow w \widehat{L} (r)w$,

$A_{w,v}(a)=\sum\{g_w(k)e a e^* g_w (k)^*\mid k\geq 0, e \in \mathcal{E}(r(g_w(k))), r(e)=v\}$. If the vertex $v$ does not appear as range of some path  $g_w(k) e, e \in \mathcal{E}(r(g_w(k))),$ then $A_{w,v}=0.$

\bigskip
Let $W=\{w_1,\cdots,w_r\}.$ Consider the matrix $A=(A_{w_i,w_j})_{r\times r}$. Consider the $r-$ tuples $\overline{w}=(w_1,\cdots,w_r)^T$ and $\overline{e}_w=(e_{w_1},\cdots, e_{w_r})^T.$ Then $\overline{e}_w=(I-A)\overline{w}.$ We have $A^i \overline{w}\subseteq (V_{2i},\cdots, V_{2i})^T$, hence $A^i \overline{w}\rightarrow 0$ as $i\rightarrow\infty.$ Now ,
$\overline{w}=\sum\limits_{i=0}^{\infty} A^{i}\overline{e_w}\in (J,\cdots, J)^{T},$ which proves the Lemma.

\end{proof}

\begin{corollary}
$\widehat{L}(\Gamma)$ is generated (as an ideal) by the set $\{e_w, e \in \bigcup \limits _{i} W_i \}$
\end{corollary}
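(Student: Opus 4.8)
The plan is to combine two facts that have already been established in the excerpt: the direct-sum decomposition $\widehat{L}(\Gamma) = I(W_1) \oplus \cdots \oplus I(W_k)$ recorded immediately after Lemma 17 (where $W_1, \ldots, W_k$ is the frame of $\Gamma$), and Lemma 24, which says that for a minimal hereditary subset $W$ the ideal $I(W)$ is generated, as an ideal, by the idempotents $e_w$, $w \in W$. Since each $W_i$ is a minimal hereditary subset, Lemma 24 applies to every summand.

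First I would fix the frame $W_1, \ldots, W_k$ and let $J$ denote the two-sided ideal of $\widehat{L}(\Gamma)$ generated by the set $\{e_w \mid w \in \bigcup_{i=1}^{k} W_i\}$. For each fixed index $i$, the subset $\{e_w \mid w \in W_i\}$ is contained in $J$, hence the ideal generated by this subset is contained in $J$; by Lemma 24 that ideal is precisely $I(W_i)$, so $I(W_i) \subseteq J$ for every $i$.

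Summing over $i$ yields $I(W_1) + \cdots + I(W_k) \subseteq J$. Because $\widehat{L}(\Gamma) = I(W_1) \oplus \cdots \oplus I(W_k)$, the left-hand side is all of $\widehat{L}(\Gamma)$, and therefore $J = \widehat{L}(\Gamma)$, which is exactly the assertion of the corollary. Alternatively, one may note directly that every $v \in V$ lies in some $I(W_j)$ by the decomposition, hence $v \in J$, so $1 = \sum_{v \in V} v \in J$ and again $J = \widehat{L}(\Gamma)$.

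There is essentially no obstacle: both ingredients are already proved, and the corollary is a one-line bookkeeping step. The only point that merits a clarifying remark is the sense of ``generated as an ideal'': Lemma 24 presents $I(W_i)$ as the (algebraic) ideal generated by the $e_w$, and since the $I(W_i)$ already sum to the whole algebra, no closure operation is needed — the algebraic ideal generated by $\{e_w \mid w \in \bigcup_i W_i\}$ is all of $\widehat{L}(\Gamma)$.
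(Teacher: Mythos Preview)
Your argument is correct and is exactly the intended one: the paper states this corollary immediately after Lemma 24 without a separate proof, treating it as the evident combination of Lemma 24 with the decomposition $\widehat{L}(\Gamma)=I(W_1)\oplus\cdots\oplus I(W_k)$ recorded after Lemma 17. Your remark that no closure is needed is also apt, since Lemma 24 already yields each $I(W_i)$ as an algebraic ideal.
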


 Let $V=S_1 \dot{\cup}...\dot{\cup} S_m$ be all connected components of the graph $(V, \tilde{\gamma}(V))$. Let $J_i$ be the closed ideal of $\widehat{L}(\Gamma)$ generated by the set $e_v, v \in S_i.$

 \begin{prop}
 \begin{itemize}
   \item[1.] $\widehat{L}(\Gamma)=J_1\bigoplus \cdots\bigoplus J_m;$
   \item[2.] each $J_i$ is a (topologically) simple algebra;
   \item[3.] $I(W_i)=\bigoplus J_i,$ the direct sum is taken over all $J$ such that $S_J \cap W_i \neq \emptyset .$
 \end{itemize}
  \end{prop}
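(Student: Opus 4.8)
The plan is to establish the three claims in order, leaning on the idempotent machinery developed in Section~5 and the ideal structure from Section~4.

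\textbf{Part 1.} First I would show $\widehat{L}(\Gamma)=J_1\oplus\cdots\oplus J_m$. By Corollary~3 the whole algebra is generated as an ideal by $\{e_w\mid w\in\bigcup_i W_i\}$, and each such $w$ lies in some connected component $S_j$ of $(V,\tilde\gamma(V))$, so $\sum_j J_j=\widehat{L}(\Gamma)$. For directness I would use Lemma~21(2): if $v\in S_i$ and $w\in S_j$ with $i\ne j$ then $e_v\widehat{L}(\Gamma)e_w=(0)$, so the ideals $J_i$ and $J_j$ annihilate each other, $J_iJ_j=(0)$. Combined with the fact that $\sum_v e_v$ (summed appropriately over each component, or better, the central idempotent supported on $S_i$) gives a decomposition of $1$ into central idempotents $f_i=e(U_i)$ where $U_i=(W_{j_1}\cup\cdots)^{\bot\bot}$ is the hereditary closure of the minimal sets meeting $S_i$ — so $J_i=f_i\widehat{L}(\Gamma)$ is a ring direct summand. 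The mutual annihilation forces the sum to be direct.

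\textbf{Part 2.} Next, topological simplicity of each $J_i$. Let $I\subseteq J_i$ be a nonzero closed graded ideal of $J_i$; since $J_i$ is a direct summand, $I$ is also a closed graded ideal of $\widehat{L}(\Gamma)$. By Part~3 (or directly), $J_i$ sits inside $I(W)$ for the hereditary set $W$ that is the $\bot\bot$-closure of the minimal sets $W_j$ with $S_j\cap W_i\ne\emptyset$; applying Lemma~23 to $I$ and $W$ yields a vertex $w\in W$ with $e_w\in I$. By Lemma~21(1), all $e_v$ for $v$ in the same connected component of $(V,\tilde\gamma(V))$ as $w$ generate the same closed ideal, and by Lemma~14/Corollary~1 together with Lemma~24 that ideal is all of $J_i$ (the minimal hereditary set reachable from $w$ generates, via its vertex-idempotents $e_{w'}$, the full $I(W_j)$, and these in turn generate $J_i$). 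Hence $I=J_i$. The main obstacle here is bookkeeping: making sure that "the closed ideal generated by one $e_w$" really sweeps out every $J$-summand inside $I(W_i)$; this requires combining Lemma~21(1) (connectivity moves $e_w$ around within a component) with Lemma~24 (the $e_{w'}$, $w'\in W_j$, generate $I(W_j)$) and the observation that every vertex of $\Gamma$ has a special descendant in $\bigcup W_i$ (Lemma~12 plus the structure of special paths), so that reaching one minimal set suffices to reach everything in that component.

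\textbf{Part 3.} Finally, $I(W_i)=\bigoplus\{J_j\mid S_j\cap W_i\ne\emptyset\}$. For $\supseteq$: if $S_j\cap W_i\ne\emptyset$ pick $v\in S_j\cap W_i$; then $e_v\in I(W_i)$ because $e_v=\lim g_v(n)g_v(n)^*$ with each $g_v(n)$ a special path starting at $v\in W_i$, and $W_i$ hereditary forces $r(g_v(n))\in W_i$, so every basic term $g_v(n)g_v(n)^*\in I(W_i)$; since $I(W_i)$ is a closed ideal, $J_j\subseteq I(W_i)$. For $\subseteq$: by Lemma~24, $I(W_i)$ is generated as an ideal by $\{e_w\mid w\in W_i\}$, and each such $w$ lies in some $S_j$ with $S_j\cap W_i\ni w\ne\emptyset$, so $e_w\in J_j$ and hence $I(W_i)\subseteq\sum\{J_j\mid S_j\cap W_i\ne\emptyset\}$. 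Directness of this subsum is inherited from Part~1. I would close by noting that, combined with $\widehat{L}(\Gamma)=I(W_1)\oplus\cdots\oplus I(W_k)$ from Lemma~17, Part~3 shows the partition of $\{1,\dots,m\}$ induced by "which $W_i$ does $S_j$ meet" is exactly the partition refining the $I(W_i)$-decomposition into simple pieces, completing the promised decomposition of $\widehat{L}(\Gamma)$ into minimal ideals.
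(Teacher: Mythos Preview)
Your proof is correct and follows essentially the same route as the paper's: Part~1 via Lemma~21 and Corollary~3, Part~2 via Lemma~23 (plus Lemma~21(1) to propagate the single $e_w$ to all of $J_i$), and Part~3 via Lemmas~21 and~24. The paper's own argument is far terser---literally three sentences citing these lemmas---and your detour through auxiliary central idempotents $f_i=e(U_i)$ in Part~1 is unnecessary: once $J_iJ_j=(0)$ for $i\neq j$ and $\sum_j J_j=\widehat{L}(\Gamma)$, any element of $J_i\cap\sum_{j\neq i}J_j$ annihilates every $J_l$ and hence annihilates $1$, so directness is automatic.
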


   \begin{proof}
   The first assertion immediately follows from Lemma 21 and the corollary of Lemma 24. The second assertion follows from Lemma 23. The third assertion follows from Lemmas 21, 24, which finishes the proof of the Proposition.\\
   \end{proof}

   Remark that each component $S_i$ intersects just one minimal hereditary subset $W_i$. Indeed, if $S_i \cap W_i \backepsilon v$ and $S_i \cap W_j \backepsilon w,$ then by Lemma 20 the vertices $v$ and $w$ have a common descendant, which implies $i=j$. If $S_i \cap W_i =\emptyset$ for every $i$ then by Lemma 21(2) we have $J_i.id(e_v, v \in \bigcup \limits _{i} W_i )=(0).$ However, Lemma 24 implies that $id(e_v, v \in \bigcup \limits _{i} W_i)=I(W_1)\bigoplus\cdots\bigoplus I(W_k)=\widehat{L}(\Gamma), $ a contradiction.\\
   \bigskip
   Now we will show that an arbitrary finite connected graph has a specialization in which the decomposition of the Proposition 1(3) looks particularly nice.\\

   If $\gamma: V \rightarrow E$ is a specialization of a graph $\Gamma$ and $W$ is a hereditary subset of $V$ then the restriction of $\gamma$ to $W$ is a specialization
 of the graph $(W, E(W,W))$. We will denote this restriction as $\gamma_{\scriptstyle W}$\\
 \bigskip
 Let $W_1,\cdots ,W_k$ be the frame of the graph $\Gamma=(V,E)$. We call a specialization $\gamma:V \rightarrow E$ regular if \\
 (1). There are finitely many special paths with all vertices lying in $V\setminus(\bigcup \limits _{i}^k W_i)$,\\
 (2). Each graph $(W_i, \tilde{\gamma}_{\scriptstyle W_i})$ is connected,  $1 \leq i \leq k.$

  \begin{lemma}
   An arbitrary finite graph $\Gamma$ has a regular specialization.
 \end{lemma}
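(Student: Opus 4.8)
The plan is to build $\gamma$ out of two pieces that do not interact: its restriction to the frame $\bigcup_{i=1}^{k}W_i$, chosen so as to secure condition (2), and its values on $V\setminus\bigcup_{i=1}^{k}W_i$, chosen exactly as in Lemma 13 so as to secure condition (1). The two prescriptions together will define $\gamma$ on every non-sink of $\Gamma$: the $W_i$ are pairwise disjoint, the only sinks of $\Gamma$ are the singleton $W_i$'s (if $v$ is a sink then $\{v\}$ is hereditary, hence $W_i=\{v\}$ for some $i$ by minimality), and every vertex of a non-singleton $W_i$ has an out-edge staying inside $W_i$ by heredity.

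First I would handle a fixed minimal hereditary subset $W_i$. If $W_i=\{v\}$ is a single sink, then $E(W_i,W_i)=\emptyset$, the graph $(W_i,\tilde\gamma_{W_i})$ is one isolated vertex, and condition (2) is vacuous. Otherwise $W_i$ contains no sink, so by heredity $(W_i,E(W_i,W_i))$ has no sinks; moreover, since in a minimal hereditary set every vertex is a descendant of every other and, by heredity, the connecting paths stay inside $W_i$, the digraph $(W_i,E(W_i,W_i))$ is strongly connected. Being finite and sink-free it contains a cycle $C=e_1\cdots e_\ell$; set $\gamma(s(e_j))=e_j$ (legitimate, as the $s(e_j)$ are distinct). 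For $w\in W_i\setminus V(C)$, use strong connectedness to pick a path inside $W_i$ from $w$ to $V(C)$ of minimal length and let $\gamma(w)$ be its first edge. Then iterating $\gamma$ from any $w\in W_i\setminus V(C)$ strictly decreases the distance to $V(C)$, so after finitely many steps it reaches $V(C)$ and never returns to $w$; hence the functional digraph determined by $\gamma$ on $W_i$ has $C$ as its only cycle and every vertex joined, in the underlying undirected graph, to $V(C)$, which is itself a cycle. Therefore $(W_i,\tilde\gamma_{W_i})$ is connected, i.e. condition (2) holds.

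Next I would treat $V\setminus\bigcup_{i=1}^{k}W_i$ by repeating the argument of Lemma 13. Form $\Gamma'=\Gamma^{W_1\cup\cdots\cup W_k}$ with extra sink $w$; by Lemma 12, $w$ is a descendant of every vertex of $\Gamma'$, so Lemma 11 supplies a specialization $\gamma'$ of $\Gamma'$ with only finitely many special paths. Transfer $\gamma'$ to the vertices of $V\setminus\bigcup_{i}W_i$: if $r(\gamma'(u))\neq w$ put $\gamma(u)=\gamma'(u)$, and if $r(\gamma'(u))=w$ then $\gamma'(u)$ is the redirection of an edge $e$ of $\Gamma$ with $s(e)=u$ and $r(e)\in\bigcup_{i}W_i$, and we put $\gamma(u)=e$. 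As in Lemma 13, any special path of $\Gamma$ all of whose vertices lie in $V\setminus\bigcup_{i}W_i$ is also a special path of $\Gamma'$, and there are only finitely many of those; this gives condition (1). Combining the two pieces yields a regular specialization.

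I expect the only delicate point to be the frame part: one must check that choosing, at each non-cycle vertex, the first edge of a shortest path to the fixed cycle $C$ produces a functional digraph with a single weak component — equivalently, exactly one cycle — so that its underlying undirected graph is connected. The distance-decreasing observation is what rules out any second cycle, and the strong connectedness of $(W_i,E(W_i,W_i))$, which is where minimality of $W_i$ enters, is what guarantees that the required paths to $C$ exist in the first place.
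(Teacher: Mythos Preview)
Your proposal is correct and follows essentially the same two-piece strategy as the paper: define $\gamma$ on each non-sink $W_i$ so that $(W_i,\tilde\gamma_{W_i})$ is connected, then extend to $V\setminus\bigcup_i W_i$ exactly as in Lemma~13 to secure condition~(1). The only difference is the construction on each $W_i$: you fix a cycle $C$ and send every off-cycle vertex along the first edge of a shortest path to $V(C)$, whereas the paper chooses a spanning in-tree $(W_i,E_i)$ rooted at some $w_i$, sets $\gamma_i(w)$ to be the unique tree edge out of $w$ for $w\neq w_i$, and picks $\gamma_i(w_i)$ arbitrarily; both constructions yield a functional digraph on $W_i$ with a single cycle, hence a single weak component, so either works.
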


   By the proof of Lemma 13 a arbitrary specializations of non-sink minimal hereditary subsets $\gamma_i:W_i\rightarrow E(W_i, W_i)$ can be extended to a specialization $\gamma:V \rightarrow E$ with the property (1). Hence, it remains to find regular specializations on graphs $(W_i, E(W_i, W_i)),$ where $W_i$ does not consist of one sink.
    We have already mentioned that each graph $(W_i,E(W_i,W_i))$  is strongly connected, that is every vertex of it is a descendant of every other vertex.\\

 A graph $(V,E)$ is called a tree if there exists a vertex $v_0 \in V$ such that an arbitrary vertex in $V$ can be connected to $v_o$ by a unique path. An arbitrary strongly connected graph $(V,E)$ has a spanning subtree $(V,E'), E'\subseteq E$ (see [BJG]). Let $(W_i, E_i)$ be a spanning subtree of the graph $(W_i, E(W_i, W_i))$ ,
 $E_i \subset E(W_i,W_i)$. Let $w_i \in W_i$ be a such a vertex that an arbitrary vertex in $W_i$ can be connected to $w_i$ by a unique path in $(W_i, E_i)$.\\
 \bigskip
 If $w \in W_i, w \neq w_i$ then there exists a unique edge $e \in E_i$ such that $s(e)=w.$ We let $\gamma_i(w)=e.$ The edge $\gamma_i(w_i)$ is chosen arbitrarily in $s^{-1}(w_i).$ It is easy to see that the  graph $(W_i,\tilde{\gamma}_i)$ is connected which finishes the proof of the Lemma.\\
 Now the Proposition 1 implies

 \begin{prop}
  If $\gamma$ is a regular specialization then each ideal $I(W_i)$ is a topological graded simple algebra.
  \end{prop}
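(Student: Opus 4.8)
The plan is to deduce Proposition~2 from Proposition~1 by showing that, when $\gamma$ is regular, each ideal $I(W_i)$ is \emph{equal} to one of the topologically simple summands $J_j$. Recall the decomposition $\widehat{L}(\Gamma)=J_1\oplus\cdots\oplus J_m$, where $J_j$ is the closed ideal generated by $\{e_v\mid v\in S_j\}$ and $S_1,\dots,S_m$ are the connected components of $(V,\tilde{\gamma}(V))$; by Proposition~1(3) we have $I(W_i)=\bigoplus J_j$, the sum taken over those indices $j$ with $S_j\cap W_i\neq\emptyset$. So the whole proof comes down to verifying that exactly one component $S_j$ meets $W_i$.

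First I would observe that the special edges issuing from vertices of $W_i$ are precisely the edges of the graph $(W_i,\tilde{\gamma}_{W_i})$: since $W_i$ is hereditary, for every non-sink $w\in W_i$ the edge $\gamma(w)$ has its range among the descendants of $w$, hence in $W_i$, so $\gamma_{W_i}$ is a genuine specialization of $(W_i,E(W_i,W_i))$ whose (undirected) edges sit inside those of $(V,\tilde{\gamma}(V))$. Consequently any path in $(W_i,\tilde{\gamma}_{W_i})$ is also a path in $(V,\tilde{\gamma}(V))$. Now regularity condition~(2) says $(W_i,\tilde{\gamma}_{W_i})$ is connected (this is automatic, and vacuous, in the degenerate case that $W_i$ is a single sink), so any two vertices of $W_i$ are connected in $(V,\tilde{\gamma}(V))$; hence $W_i$ lies entirely in a single component, say $S_j$. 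If some other component $S_l$ satisfied $S_l\cap W_i\neq\emptyset$, it would share a vertex with $S_j$ and therefore equal $S_j$. This gives uniqueness, and thus $I(W_i)=J_j$.

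It then remains only to record that $I(W_i)$ has the asserted structure. It is a graded subalgebra of $\widehat{L}(\Gamma)$, its homogeneous components being cut out by fixing $\deg(pq^*)=l(p)-l(q)$; and by Proposition~1(2) the algebra $J_j$ is topologically simple, so it has no proper nonzero closed ideals, in particular no proper nonzero closed graded ideals, i.e.\ it is topologically graded simple. Since $I(W_i)=J_j$, the same holds for $I(W_i)$. The only step with any real content is the middle one --- translating regularity condition~(2) into ``$W_i$ lies in a single connected component of $(V,\tilde{\gamma}(V))$'' --- and this hinges entirely on the hereditariness of $W_i$ keeping the special edges out of $W_i$'s vertices inside $W_i$; everything else is bookkeeping with the already established Proposition~1.
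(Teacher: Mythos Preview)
Your argument is correct and matches the paper's intended approach: the paper simply asserts that Proposition~1 implies Proposition~2, and you have supplied exactly the missing step---namely, that regularity condition~(2) forces all of $W_i$ into a single connected component $S_j$ of $(V,\tilde{\gamma}(V))$, so that the decomposition in Proposition~1(3) collapses to $I(W_i)=J_j$. Your observation that hereditariness of $W_i$ ensures $\gamma(W_i)\subseteq E(W_i,W_i)$, so that $(W_i,\tilde{\gamma}_{W_i})$ embeds in $(V,\tilde{\gamma}(V))$, is the only nontrivial ingredient, and it is handled correctly.
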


\begin{corollary}
Let $L(\Gamma)$ be a prime Leavitt path algebra. Let $\gamma$ be a regular specialization on $\Gamma$. Then $\widehat{L}(\Gamma)$ is topological graded simple.
\end{corollary}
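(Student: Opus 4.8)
The plan is to deduce the corollary from Proposition 2 by showing that primeness of $L(\Gamma)$ forces the frame of $\Gamma$ to consist of a single minimal hereditary subset. Recall from the discussion following Lemma 17 that $\widehat{L}(\Gamma) = I(W_1) \oplus \cdots \oplus I(W_k)$, where $W_1, \ldots, W_k$ is the frame of $\Gamma$; here $k \geq 1$, because $L(\Gamma)$ is prime and hence $V \neq \emptyset$, and the nonempty hereditary set $V$ contains a minimal one. If $k = 1$ then $\widehat{L}(\Gamma) = I(W_1)$, and since $\gamma$ is regular, Proposition 2 says that $I(W_1)$ is a topological graded simple algebra, which is precisely the assertion. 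So everything reduces to proving that a prime $L(\Gamma)$ has $k = 1$.

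I would prove this by contradiction. Suppose the frame contains two distinct sets $W_1 \neq W_2$; as noted in the text, distinct minimal hereditary subsets are disjoint, and each is nonempty. Let $I_j$ be the (algebraic) two-sided ideal of $L(\Gamma)$ generated by $W_j$, for $j = 1,2$. Each $I_j$ is nonzero, since it contains the nonzero idempotent $w$ for any $w \in W_j$, and a routine calculation with the Leavitt relations shows that $I_j$ is the $F$-span of the elements $\mu\nu^*$ with $r(\mu) = r(\nu) \in W_j$ (one uses that $W_j$ is hereditary, so that the ranges of path-continuations stay inside $W_j$). I would then check that $I_1 I_2 = (0)$: in a product $\mu_1\nu_1^*\mu_2\nu_2^*$ with $r(\nu_1) \in W_1$ and $r(\mu_2) \in W_2$, the middle factor $\nu_1^*\mu_2$ vanishes by Remark 1 — it could be nonzero only if one of $\nu_1, \mu_2$ were a continuation of the other, but that would produce a path joining a vertex of $W_2$ to a vertex of $W_1$ (or, when $\nu_1 = \mu_2$, force $r(\nu_1) \in W_1 \cap W_2$), contradicting that $W_1$ and $W_2$ are disjoint and hereditary. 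Hence $I_1 I_2 = (0)$ with $I_1, I_2 \neq (0)$, contradicting primeness of $L(\Gamma)$; therefore $k = 1$.

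With $k = 1$ established, the corollary follows at once. (Equivalently, one may invoke the standard description of primeness for Leavitt path algebras of finite graphs — that $V$ is downward directed, i.e.\ any two vertices have a common descendant — and observe that a common descendant of $w_1 \in W_1$ and $w_2 \in W_2$ would have to lie in $W_1 \cap W_2 = \emptyset$.) I do not expect a genuine obstacle in this argument: the substantive work has already been carried out in Propositions 1 and 2, and the only remaining point — the vanishing $I_1 I_2 = (0)$ — is a short case analysis based on Remark 1.
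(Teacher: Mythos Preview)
Your proposal is correct and follows exactly the intended route: the paper states the corollary without proof, relying on Proposition~2 together with the decomposition $\widehat{L}(\Gamma)=I(W_1)\oplus\cdots\oplus I(W_k)$, and you have correctly supplied the missing observation that primeness of $L(\Gamma)$ forces $k=1$. Your verification that $I_1 I_2=(0)$ when $W_1\neq W_2$ via Remark~1 and heredity is the standard argument, and the alternative phrasing through downward directedness of $V$ is equally valid.
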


\section*{Acknowledgement}
This project was funded by the Deanship of Scientific Research (DSR), King Abdulaziz University, under Grant No.
(27-130-36-HiCi). The authors, therefore, acknowledge technical and financial support of KAU.

%
%
%
%
%
%
%
%
%
%

\begin{thebibliography}{99}
\addtolength{\leftmargin}{0.2in}
\setlength{\itemindent}{-0.2in}

\bibitem[A]{A}G. Abrams, Leavitt path algebras: the first decade, Bull. Math. Sci.
\bibitem[AA]{AA} G. Abrams, G. Aranda Pino, The Leavitt path algebra of a graph, J. Algebra 293 (2005), 319-334.
\bibitem[AMP]{AMP} P. Ara, M. Moreno, E. Pardo, Non stable K- theory for graph algebras, J. Algebra Represent Theory 10( 2007), 157-178.
\bibitem[AAJZ1]{AAJZ1} A. Alahmadi, H. Alsulami, S.K. Jain, E. Zelmanov, Leavitt path algebras of finite Gelfand-Kirillov dimension, J. Algebra and Applications, 171, (2012).
\bibitem[AAJZ2]{AAJZ2} A. Alahmadi, H. Alsulami, S.K . Jain, E. Zelmanov,  Structure of Leavitt path algebras of polynomial growth, PNAS, 10 (38), 15222-15224.
\bibitem[BJG]{BJG} J. Bang- Jensen, G. Gutin, Digraphs: theory and applications, Springer- Verlag, 2000.
\bibitem[BC]{BC} L. Bokut, Y. Chen, Groebner-Shirshov bases and their calculation, Bull. Math. Sci.
\bibitem[J]{J} N. Jacobson, Some remarks on one sided inverses, Proc. AMS (1950), 352-355.
\bibitem[SM]{SM} M. Siles Molina, Algebras of quotients of path algebras, J. Algebra 319 (2012), 5265-5278.
\end{thebibliography}
\end{document}